\definecolor{myred}{RGB}{163, 51, 61}
\definecolor{myblue}{RGB}{90, 107, 127}
\definecolor{mygreen}{RGB}{133, 177, 168}
\newcommand{\Z}{\mathbb{Z}}
\newcommand{\R}{\mathbb{R}}
\DeclareMathOperator{\vol}{vol}
\DeclareMathOperator{\poly}{poly}
\newcommand{\norm}[1]{\|#1\|}
\newcommand{\df}{\mathrel{\mathop:}=}
\newcommand{\cF}{\mathcal{F}}
\DeclareMathOperator{\supp}{supp}
\begin{document}
\title{Sparse Approximation Over the Cube}
%
%
\author{Sabrina Bruckmeier \inst{1} \orcidID{0000-0001-5673-3383} \and
Christoph Hunkenschröder \inst{2}\orcidID{0000-0001-5580-3677} \and
Robert Weismantel \inst{1}}
\authorrunning{S. Bruckmeier et al.}
%
\institute{ETH Zürich, Zürich, Switzerland \\ \email{\{sabrina.bruckmeier, robert.weismantel\}@ifor.math.ethz.ch} \and
TU Berlin, Berlin, Germany\\ \email{hunkenschroeder@tu-berlin.de}}
\maketitle              
\begin{abstract}
This paper presents an anlysis of the NP-hard minimization problem $\min \{\|b - Ax\|_2: \ x \in [0,1]^n, | \supp(x) | \leq \sigma\}$, where $\supp(x) \df \{i \in [n]: \ x_i \neq 0\}$ and $\sigma$ is a positive integer.
The object of investigation is a natural relaxation where we replace $| \supp(x) | \leq \sigma$ by $\sum_i x_i \leq \sigma$.
Our analysis includes a probabilistic view on when the relaxation is exact.
We also consider the problem from a deterministic point of view and provide a bound on the distance between the images of optimal solutions of the original problem and its relaxation under $A$.
This leads to an algorithm for generic matrices $A \in \Z^{m \times n}$ and achieves a polynomial running time
provided that $m$ and $\|A\|_{\infty}$ are fixed.

\keywords{Sparse Approximation  \and Subset Selection \and Signal Recovery.}
\end{abstract}
\section{Introduction and Literature Review}
Due to the recent development of machine learning, data science and signal processing, more and more data is generated, but only a part of it might be necessary in order to already make predictions in a sufficiently good manner.
Therefore, the question arises to best approximate a signal $b$ by linear combinations of no more than $\sigma$ vectors $A_i$ from a suitable dictionary 
$A=\begin{pmatrix}
A_1, \dots, A_n
\end{pmatrix} \in \mathbb{R}^{m \times n}$:
\begin{equation}
\label{Problem Introduction}
\min \norm{Ax-b}_2 \text{ subject to } \norm{x}_0 \leq \sigma,
\end{equation}
where $\norm{x}_0:=|\lbrace i \in \lbrack n \rbrack: x_i \neq 0 \rbrace|$.
Additionally, many areas of application -- as for example portfolio selection theory, sparse linear discriminant analysis, general linear complementarity problems or pattern recognition -- require the solution $x$ to satisfy certain polyhedral constraints.
While there exists a large variety of ideas how to tackle this problem, the majority of them relies on the matrix $A$ satisfying conditions such as being sampled in a specific way or being close to behaving like an orthogonal system, that might be hard to verify.
Additionally, these algorithms commonly yield results only with a certain probability or within an approximation factor that again highly depends on the properties of $A$.
A discussion of these ideas and different names and variants of this problem is postponed to the end of the introduction.
In this work, we develop an exact algorithm that, without these limitations on $A$, solves the \emph{Sparse Approximation problem} in $[0,1]$-variables:
\begin{equation}
\label{P0}
\tag{$P_0$}
\min_x \| Ax-b \|_2
\text{ subject to } x \in [ 0,1 ]^n \text{ and }
\norm{x}_0 \leq \sigma.
\end{equation}
\begin{restatable}{theorem}{theomain}\label{thm:main-1}
Given $A \in \mathbb{Z}^{m \times n}, b \in \mathbb{Z}^m$ and  $\sigma \in \mathbb{Z}_{\geq 1}$, we can find an optimal solution $x$ to Problem~\eqref{P0} in $(m \norm{A}_\infty)^{\mathcal{O}(m^2)} \cdot \poly(n, \|b\|_1)$ arithmetic operations.
\end{restatable}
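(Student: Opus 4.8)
The plan is to play off \eqref{P0} against its convex relaxation $(P_{\le})$, obtained by replacing $\norm{x}_0 \le \sigma$ with $\sum_i x_i \le \sigma$, and then to enumerate over a small box in $\R^m$ around the image of the relaxation optimum. Since every $x \in [0,1]^n$ with $\norm{x}_0 \le \sigma$ satisfies $\sum_i x_i \le \sigma$, the feasible region of \eqref{P0} is contained in that of $(P_{\le})$, so the optimal value of $(P_{\le})$ is at most that of \eqref{P0}, and any point we shall exhibit will automatically be feasible for \eqref{P0}. Two structural facts drive the argument. First, since $y \mapsto \norm{y-b}_2$ is strictly convex, the polytope $P_{\le} \df \{Ax : x \in [0,1]^n,\ \sum_i x_i \le \sigma\}$ contains a unique closest point $y^\star$ to $b$; any optimum $x^\star$ of $(P_{\le})$ has $Ax^\star = y^\star$, and among those one may take $x^\star$ to be a vertex of $\{x \in [0,1]^n : \sum_i x_i \le \sigma,\ Ax = y^\star\}$, which then has at most $m+1$ coordinates outside $\{0,1\}$. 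As $(P_{\le})$ is the minimisation of a convex quadratic over a polytope with $\mathcal{O}(n)$ facets, $y^\star$ and such an $x^\star$ are computable in time $\poly(n,\norm{b}_1)$. Second, the same vertex argument applied on the optimal support of \eqref{P0} produces an optimal solution $\hat x$ with at most $m$ fractional coordinates: writing $T_1 = \{i : \hat x_i = 1\}$ and $F = \supp(\hat x)\setminus T_1$, we have $|F| \le m$ and $|T_1| + |F| \le \sigma$.

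Next I would invoke the proximity bound established earlier: one may choose $\hat x$ as above with $\norm{A\hat x - y^\star}_\infty \le \Delta$, where $\Delta = (m\norm{A}_\infty)^{\mathcal{O}(m)}$. Rounding the at most $m$ fractional coordinates of $\hat x$ down to $0$ neither enlarges the support nor moves the image by more than $m\norm{A}_\infty$ in $\ell_\infty$; hence the integral part $v \df A\one_{T_1} \in \Z^m$ satisfies $\norm{v - y^\star}_\infty \le \Delta + m\norm{A}_\infty = (m\norm{A}_\infty)^{\mathcal{O}(m)}$. Therefore the correct $v$ is one of the $\bigl((m\norm{A}_\infty)^{\mathcal{O}(m)}\bigr)^m = (m\norm{A}_\infty)^{\mathcal{O}(m^2)}$ integer points of the box $\{v \in \Z^m : \norm{v - y^\star}_\infty \le \Delta + m\norm{A}_\infty\}$, and the algorithm loops over all of them; note each satisfies $\norm{v}_\infty \le \norm{b}_1 + (m\norm{A}_\infty)^{\mathcal{O}(m)}$.

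For each candidate $v$ and each multiset $M$ of at most $m$ vectors from $\{-\norm{A}_\infty,\dots,\norm{A}_\infty\}^m$ — there are $(2\norm{A}_\infty+1)^{\mathcal{O}(m^2)}$ of these — I would (i) compute $x^M \in [0,1]^{|M|}$ minimising $\bigl\|v + \sum_{c \in M} x^M_c\, c - b\bigr\|_2$, a convex quadratic program in at most $m$ variables, and (ii) decide, by an integer feasibility subroutine, whether $v = Aw$ for some binary $w \in \{0,1\}^n$ with $\sum_i w_i \le \sigma - |M|$ that leaves at least $|M|$ unused columns of $A$ realising the multiset $M$. Step (ii) is an integer program with $m+1$ rows and entries bounded by $\norm{A}_\infty$ whose only right-hand side to be attained, $v$, has $\norm{v}_\infty \le \norm{b}_1 + (m\norm{A}_\infty)^{\mathcal{O}(m)}$; via the Steinitz-lemma dynamic programme for integer programs with few rows it runs in $\poly(n,\norm{b}_1)\cdot(m\norm{A}_\infty)^{\mathcal{O}(m)}$ time. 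Whenever (ii) succeeds, placing $w$ and the fractional weights $x^M$ on disjoint columns yields a solution of \eqref{P0} of objective value $\bigl\|v + \sum_{c}x^M_c\, c - b\bigr\|_2$; the algorithm returns the best solution found. This is correct: every returned solution is \eqref{P0}-feasible, hence has value at least the optimum, while the pair $(v,M)=(A\one_{T_1},\{A_i : i\in F\})$ arising from the optimal $\hat x$ is examined and the corresponding $x^M$ attains $\norm{A\hat x - b}_2$. Multiplying the three enumeration sizes by the cost of (i) and (ii) and adding the initial relaxation solve gives $(m\norm{A}_\infty)^{\mathcal{O}(m^2)}\cdot\poly(n,\norm{b}_1)$.

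I expect the real obstacle to be the proximity bound $\norm{A\hat x - y^\star}_\infty \le (m\norm{A}_\infty)^{\mathcal{O}(m)}$ — quantifying how far the image of a genuinely $\sigma$-sparse optimum can drift from the image of the possibly very spread-out relaxation optimum — which is exactly the distance result developed in the earlier part of the paper; a secondary, more technical point is the bookkeeping in step (ii) that guarantees the at most $m$ fractionally used columns can be reserved disjointly from the support $T_1$ without leaving the claimed running time.
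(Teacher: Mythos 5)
Your proposal has the same architecture as the paper's proof: solve the $\ell_1$-relaxation, use the proximity theorem to confine the image of the integral part $z^\star$ of an optimal solution to a small integer box around the relaxation optimum, enumerate that box, certify each candidate right-hand side by a binary integer feasibility problem with $m+1$ rows and entries bounded by $\norm{A}_\infty$, and recover the at most $m$ fractional coordinates by a small box-constrained least-squares problem. The one genuine deviation is how the fractional support is guessed: the paper enumerates the $n^m$ candidate index sets $\supp(f^\star)\subseteq[n]$ (Lemma~\ref{obs:guessing-fractional-entries}), whereas you enumerate the $(2\norm{A}_\infty+1)^{\mathcal{O}(m^2)}$ multisets of column \emph{types} in $\{-\norm{A}_\infty,\dots,\norm{A}_\infty\}^m$; this trades a $n^{m}$ factor for an $\norm{A}_\infty^{\mathcal{O}(m^2)}$ factor, which arguably matches the advertised running time more directly, at the price of the disjointness bookkeeping in your step (ii) --- that bookkeeping does go through if you aggregate the columns of $A$ by type and cap the number of selected columns of each type $c$ at its multiplicity in $A$ minus its multiplicity in $M$, but it should be spelled out. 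Two details you gloss over and that the paper treats explicitly: first, the relaxation is a convex quadratic program that interior-point methods only solve approximately, so the paper works with an $\varepsilon$-close solution $\bar{x}$ (with $\varepsilon=\sqrt{m}\norm{A}_\infty$, Corollary~\ref{cor:proximity}) and enlarges the proximity radius to $3m^{3/2}\norm{A}_\infty+\varepsilon$; your claim that the exact optimum $y^\star$ is ``computable in $\poly(n,\norm{b}_1)$'' is not justified as stated, though your enumeration box is generous enough to absorb an approximation error if you make the same adjustment. Second, your step (i) needs an \emph{exact} optimum of a box-constrained least-squares problem in at most $m$ variables; the paper's Extension Lemma obtains this by guessing, in $3^m$ ways, which coordinates are clamped at $0$ or $1$ and solving the resulting unconstrained system. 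Neither issue is structural, and the overall argument is sound.
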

Relaxing the pseudonorm $\norm{\cdot}_0$ by $\norm{\cdot}_1$ is a commonly used technique in the literature.
In contrast to previous results we are able to bound the distance between the images of these solutions under $A$ without any further assumptions on the input data and therefore derive a \emph{proximity result} that -- to the best of our knowledge -- has not been known before.
\begin{restatable}{theorem}{theoprox}
\label{thm:proximity}
Let $\hat{x}$ be an optimal solution to the following relaxation of~\eqref{P0}:
\begin{equation*}
\min_x \norm{Ax-b}_2 
\text{ subject to } x \in \lbrack 0,1 \rbrack^n \text{ and }
\norm{x}_1 \leq \sigma.
\end{equation*}
For an optimal solution $x^\star$ to~\eqref{P0} we have
\begin{equation*}
\norm{Ax^\star - A\hat{x}}_2 \leq 2\norm{\hat{x} - \lfloor \hat{x} \rfloor}_1 \max_{i=1,\dots,n} \norm{A_i}_2 \leq 2 m^{3/2} \norm{A}_{\infty},
\end{equation*}
where $\lfloor \hat{x} \rfloor$ denotes the vector $\hat{x}$ rounded down component-wise. 
\end{restatable}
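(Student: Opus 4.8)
The plan is to pass to the ``image space.'' Set $K \df \{x \in [0,1]^n : \one^\top x \le \sigma\}$, the feasible region of the relaxation (note $\|x\|_1 = \one^\top x$ on the cube, and $K$ contains the feasible set of~\eqref{P0} since $x \in [0,1]^n$ with $\|x\|_0 \le \sigma$ has $\one^\top x \le \sigma$). Then $A\hat x$ is the Euclidean projection of $b$ onto the polytope $U \df AK$; in particular it is unique, so $\|Ax^\star - A\hat x\|_2$ does not depend on which optimal $\hat x$ we pick. Write $\hat u \df A\hat x$. Since $x^\star \in K$, the first-order (obtuse-angle) inequality $\langle \hat u - b,\, Ax^\star - \hat u\rangle \ge 0$ for the projection gives the Pythagorean bound $\|Ax^\star - \hat u\|_2^2 \le \|Ax^\star - b\|_2^2 - \|\hat u - b\|_2^2$, so it suffices to bound the right-hand side by $\bigl(2\|\hat x - \lfloor \hat x\rfloor\|_1 \max_i\|A_i\|_2\bigr)^2$.

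The crucial point is that $\hat u$ lies in the relative interior of a unique face $G$ of $U$, and because $\hat u$ is the projection of $b$, the vector $b-\hat u$ is orthogonal to $\aff(G)$ (otherwise a small move inside $G$ would get strictly closer to $b$). Hence for every $u \in G$ we get the \emph{exact} identity $\|u - b\|_2^2 = \|u - \hat u\|_2^2 + \|\hat u - b\|_2^2$; this is what eliminates any dependence on $\|b\|_2$, since a point of $G$ merely close to $\hat u$ is automatically almost as close to $b$ as $\hat u$. So I would look for a feasible point $v$ of~\eqref{P0} with $Av \in G$ and $\|Av - \hat u\|_2 \le 2\|\hat x - \lfloor\hat x\rfloor\|_1 \max_i\|A_i\|_2$. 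Then $\|Ax^\star - b\|_2 \le \|Av - b\|_2$ by optimality of $x^\star$, and plugging the identity in and combining with the Pythagorean bound above closes the argument.

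To construct $v$, consider $K_G \df \{x \in K : Ax \in G\}$, which is a face of $K$ (as $G$ is a face of $U$) and contains $\hat x$. Its vertices are vertices of $K$, hence $0/1$ vectors with at most $\sigma$ ones, hence feasible for~\eqref{P0}, and they map into $AK_G = G$. Write $\hat x = \sum_p \mu_p v_p$ as a convex combination of vertices of $K_G$; a convex combination of $0/1$ values equals $0$ or $1$ only if all terms agree, so each $v_p$ coincides with $\hat x$ on every coordinate with $\hat x_i \in \{0,1\}$, i.e.\ $v_p - \hat x$ is supported on the set $F$ of fractional coordinates of $\hat x$. Averaging coordinatewise, and using that the $\mu$-weight placed on $\{p : v_{p,i}=1\}$ equals $\hat x_i$, one gets $\sum_p \mu_p \sum_i |v_{p,i}-\hat x_i| = \sum_{i\in F} 2\hat x_i(1-\hat x_i) \le 2\sum_{i\in F}\hat x_i = 2\|\hat x - \lfloor\hat x\rfloor\|_1$ (since $\lfloor\hat x_i\rfloor = 0$ for $i\in F$). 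Thus some $v_{p^\ast}$ has $\sum_i|v_{p^\ast,i}-\hat x_i| \le 2\|\hat x-\lfloor\hat x\rfloor\|_1$, whence $\|Av_{p^\ast} - \hat u\|_2 \le \bigl(\sum_i|v_{p^\ast,i}-\hat x_i|\bigr)\max_i\|A_i\|_2 \le 2\|\hat x-\lfloor\hat x\rfloor\|_1\max_i\|A_i\|_2$; take $v = v_{p^\ast}$. For the last inequality I would additionally take $\hat x$ to be a vertex of $\{x\in K : Ax = \hat u\}$ (legitimate, $\hat u$ being independent of the chosen optimum): it has at most $m+1$ fractional coordinates, and if $\one^\top x \le \sigma$ is tight then $\|\hat x - \lfloor\hat x\rfloor\|_1 = \one^\top\hat x - \one^\top\lfloor\hat x\rfloor$ is a nonnegative integer strictly below the number of fractional coordinates, so $\|\hat x-\lfloor\hat x\rfloor\|_1 \le m$ always; together with $\max_i\|A_i\|_2 \le \sqrt m\,\|A\|_\infty$ this yields $2m^{3/2}\|A\|_\infty$.

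I expect the genuine difficulty to be the choice of comparison point, not any computation. The ``obvious'' route — round $\hat x$ down to $\lfloor\hat x\rfloor$ and use only that it is feasible for~\eqref{P0} — gives $\|Ax^\star - b\|_2 \le \|\hat u - b\|_2 + \|\hat x-\lfloor\hat x\rfloor\|_1\max_i\|A_i\|_2$, and then the Pythagorean bound only delivers $\|Ax^\star - \hat u\|_2^2 \lesssim \|\hat u - b\|_2\cdot\|\hat x-\lfloor\hat x\rfloor\|_1\max_i\|A_i\|_2$, which blows up with $\|b\|_2$ (the image $A\lfloor\hat x\rfloor$ need not lie in the face $G$, so one is stuck with the obtuse-angle inequality instead of the orthogonality). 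Routing the comparison through a vertex of $K_G$ — a point whose image genuinely lies in $G$ — is exactly what upgrades the estimate to a $\|b\|$-free one; the remaining ingredients (vertices of $K_G$ are feasible for~\eqref{P0}, and the averaging constant is $2$) are then straightforward.
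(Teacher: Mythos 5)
Your argument is correct, and it shares the paper's overall skeleton --- a first-order optimality/obtuse-angle inequality at $A\hat{x}$, a feasible comparison point for~\eqref{P0} whose image lies in the ``flat'' set through $A\hat{x}$ orthogonal to $b-A\hat{x}$, a Pythagorean identity, and the factor-$2$ bound on the $\ell_1$-distance from $\hat{x}$ to that comparison point --- but it implements the two key steps differently. The paper obtains the flatness from an explicit lemma (zero-sum perturbations of $\hat{x}$ along its fractional coordinates stay in the tangent hyperplane $H$, by two-sided feasibility and optimality of $\hat{x}$), and then constructs the comparison point $y$ by hand, redistributing the fractional mass $k=\sum_{i\in\mathcal{F}}\hat{x}_i$ so that all but one of the fractional entries become $0$ or $1$; the identity $\sum_{i\in\mathcal F}y_i=\sum_{i\in\mathcal F}\hat x_i$ then gives both $Ay\in H$ and $\|y-\hat x\|_1\le 2\|\hat x-\lfloor\hat x\rfloor\|_1$. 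You instead work in the image polytope: you take the minimal face $G$ of $AK$ containing $A\hat{x}$, observe that $b-A\hat{x}\perp\aff(G)$, pull $G$ back to a face $K_G$ of $K$, and extract a fully integral vertex of $K_G$ by averaging over a convex-combination representation of $\hat{x}$ (the computation $\sum_p\mu_p\|v_p-\hat x\|_1=\sum_{i\in F}2\hat x_i(1-\hat x_i)$ is correct and yields the same constant $2$). What the paper's route buys is explicitness and elementariness --- the point $y$ is written down directly, no face structure or Carathéodory decomposition is needed, and one tolerates a single fractional entry in $y$ since only $\|y\|_0\le\sigma$ matters; what your route buys is a cleaner conceptual explanation of the $\|b\|$-independence (exact orthogonality to the face rather than membership in $H$), an integral comparison point, and an explicit justification of the ``without loss of generality'' step via uniqueness of the projection $A\hat{x}$ (your bookkeeping that a vertex of $\{x\in K:Ax=A\hat x\}$ may have $m+1$ fractional coordinates but fractional mass at most $m$, by integrality of $\sigma$, is a valid substitute for the paper's Lemma on few fractional entries). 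Your closing remark on why rounding $\hat x$ down fails is exactly the obstruction the paper's hyperplane lemma is designed to circumvent.
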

We also illuminate our approach from a probabilistic point of view.
Specifically, the hard instances are those where $b$ is relatively close to the boundary of the polytope $Q \df \{Ax: \ x \in [0,1]^n, \ \|x\|_1 \leq \sigma\}$.
Conversely, if $b$ is deep inside $Q$ or far outside of $Q$, then with high probability, an optimal solution to the relaxation solves the initial problem~\eqref{P0}.

The paper is organized as follows.
We conclude the introduction by providing an overview on related literature.
Section~\ref{sec:prelim} discusses preliminaries.
The probabilistic analysis of a target vector $b$ is carried out in Section~\ref{sec:sampling}.
We then discuss a worst-case proximity bound between optimal solutions of~\eqref{P0} and a natural relaxation in Section~\ref{sec:proximity}.
This will allow us to formalize a deterministic algorithm in Section~\ref{sec:algorithm}.

In the literature, Problem \eqref{Problem Introduction} can be found under various modifications and names, see e.g.\ \cite{DecodingByLinearProgramming, CandesTao, Bresler, Samet}.
A common variant in the context of random measurements is often called \textit{Sparse Recovery}, cf.~\cite{Gilbert}, or \textit{Subset Selection for (linear) regression}, cf.~\cite{Das}, while the name \textit{(Best) Subset Selection} is generally used without further interpretation cf.~\cite{Bresler, Weismantel, Zhu}, in contrast to \textit{Signal Recovery} or \textit{Signal Reconstruction} as in \cite{Romberg}. 
If the vector $b$ can be represented exactly, the problem is called \textit{Exact Sparse Approximation} or \textit{Atomic Decomposition}, cf.~\cite{ChenDonohoSaunders, Gilbert02approximationof, Tropp, Civril}.
Since the differences are marginal and the names in the literature not well-defined, we restrain ourselves to the name \textit{Sparse Approximation} for simplicity. In general, there are two common strategies used to tackle \textit{Sparse Approximation}: Greedy algorithms and relaxations -- a detailed discussion of which would be beyond the scope of this paper. These algorithms either recover the optimal support only under certain conditions (compare \cite{ Ament, Bresler, Kempe, Tropp}), recover it with high probability (see for example \cite{Donoho, Zhu}) or approximate the solution (for instance \cite{Das, Ethan, Gilbert02approximationof}). 
Unfortunately, because of their high computational cost most common greedy algorithms are not sufficient for large systems, though experiments suggest that there still exist applicable greedy approaches, such as the Dropping Forward-Backward Scheme, introduced by Nguyen \cite{Nguyen}.
While the idea of relaxing the pseudonorm $\norm{\cdot}_0$ by the norm $\norm{\cdot}_1$, as done for example in Basis Pursuit by Chen, Donoho and Saunders \cite{ChenDonohoSaunders}, might seem intuitive, for a long time the success of this method was not quite understood. This changed as Candes, Romberg and Tao \cite{DecodingByLinearProgramming, Candes} discovered and improved the Uniform Uncertainty Principle. For the usually problematic case of having not enough data points, the Dantzig Selector presented by Candes and Tao \cite{CandesTao} yields a sophisticated estimator with high probability. Similarly, LASSO based methods, see for instance \cite{Samet}, either recover the support with high probability exactly under certain conditions, or fail with high probability if the conditions are not met, cf.~\cite{Wainwright}.  Finally, Garmanik and Zadik \cite{David} revealed interesting structural results, that explain the above mentioned all-or-nothing behavior.
There also exists a series of papers in a similar line of thought that relaxes $\norm{\cdot}_0$ by smooth, non-decreasing, concave functions, see \cite{RinaldiSchoen, Waechter,Fung, Dongdong,Haddou,Mangasarian,  Rinaldi,  Schoen}. It can be shown that these relaxations converge towards the optimal solution of \eqref{P0}. Qian et al. \cite{Chao} and Çivril \cite{Civril} proved that, unless $P=NP$, for a general matrix $A$ Pareto Optimization and the two greedy algorithms, Forward Selection and Orthogonal Matching Pursuit, are almost the best we can hope for. This motivated a search for more efficiently solvable classes of $A$, cf. \cite{Romberg, Weismantel, Gao, Gilbert}. 
Finally, it should be mentioned that there exists a variety of Branch-and-Bound algorithms whose success though is in general only tested experimentally, see \cite{Kendall, Hocking}. 

\section{Preliminaries}
\label{sec:prelim}
Let $A \in \mathbb{Z}^{m \times n}$ and $b \in \mathbb{Z}^m$.
Moreover, let $\supp (x)$ denote the support of $x$, i.e.\ $\supp (x) \df \{ i \in [ n ] : x_i \neq 0 \}$ and set $\norm{x}_0  \df |\supp (x)|$.
For the rest of the paper, $x^\star$ denotes an optimal solution for \eqref{P0} for a given integer $\sigma \in \Z_{\geq 1}$.
A natural convex relaxation of~\eqref{P0} is given by
\begin{equation}
\label{P1}\tag{$P_1$}
\min_x \norm{Ax-b}_2 \text{ subject to } x \in \lbrack 0,1 \rbrack^n \text{ and }
\norm{x}_1 \leq \sigma.
\end{equation}
An optimal solution to~\eqref{P1} will be denoted by $\hat{x}$ throughout the paper.
When $m=1$, there exists an optimal solution $\hat{x}$ for~\eqref{P1} that has at most one fractional variable (see Lemma~\ref{lem:frac-entries}).
This solution is also feasible for~\eqref{P0}, and hence optimal.
The idea of our approach is to establish a proximity result for 
$A\hat{x}$ and $Ax^\star$ respectively,
that we can exploit algorithmically.
This proximity bound depends on $m$ which comes as no surprise, given that the problem is NP-hard even for fixed values of $m$. 
The latter statement can be verified by reducing the NP-hard partition problem to an instance of~\eqref{P0}.
\begin{theorem}
The problem \eqref{P0} is NP-hard, even if $m=2$.
\end{theorem}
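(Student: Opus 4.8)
The plan is to reduce the NP-hard Partition problem to an instance of~\eqref{P0} with $m=2$. Recall that in Partition we are given positive integers $a_1,\dots,a_n$ with $\sum_{i=1}^n a_i = 2S$, and we must decide whether there is a subset $I \subseteq [n]$ with $\sum_{i\in I} a_i = S$. First I would set $\sigma \df n$, so that the sparsity constraint $\|x\|_0 \le \sigma$ is vacuous and~\eqref{P0} becomes a pure bounded-variable least-squares problem over the cube $[0,1]^n$. The point of keeping $m=2$ rather than $m=1$ is that a single row cannot simultaneously encode the target sum $S$ and penalize fractional choices; two rows give us exactly enough room.

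The construction I would use is the following. Take the first row of $A$ to be $(a_1,\dots,a_n)$ and set $b_1 \df S$; this row measures how close $\sum_i a_i x_i$ is to $S$. For the second row, I would use a row that is minimized (for the given value of the first coordinate) precisely at $0/1$ points — for instance, a row forcing $\sum_i x_i$ to take an integral value, or, more robustly, a scaling argument in which the first row is multiplied by a large integer $M$ so that $\|Ax-b\|_2^2 = M^2(\sum_i a_i x_i - S)^2 + (\text{second coordinate})^2$ and the second coordinate's contribution is provably too small to ever compensate a nonzero first coordinate. In the cleanest version, one shows: the optimal value of~\eqref{P0} is $0$ (equivalently, below a threshold computable from the input) if and only if the Partition instance is a yes-instance. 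For the ``if'' direction, the indicator vector of a valid subset $I$ is feasible and makes the first coordinate vanish. For the ``only if'' direction, one must argue that any $x\in[0,1]^n$ with $\sum_i a_i x_i = S$ (or close to it) can be pushed to a $0/1$ vector without increasing the objective — this is where the second row does its work, penalizing any remaining fractionality.

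The key steps in order are: (i) fix $\sigma=n$ to neutralize the support constraint; (ii) write down $A\in\Z^{2\times n}$ and $b\in\Z^2$ explicitly in terms of the $a_i$ and $S$, together with any scaling factor $M$; (iii) show that a yes-instance of Partition yields a feasible $x$ achieving objective value $0$ (or matching the threshold); (iv) show the converse, namely that an optimal $x$ for~\eqref{P0} below the threshold must be (or can be rounded to) a $0/1$ vector whose support sums to $S$ under the $a_i$; (v) observe the reduction is polynomial in the input size, since all entries of $A$ and $b$ are polynomially bounded. Since $m=2$ is fixed throughout, NP-hardness of~\eqref{P0} for fixed $m$ follows.

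The main obstacle is step~(iv): ruling out a fractional optimum that beats every integral point. The danger is an $x$ with $\sum_i a_i x_i = S$ exactly but several coordinates strictly between $0$ and $1$ — such a point already has first coordinate $0$, so the second row must be designed so that its contribution is strictly smaller at some $0/1$ point than at any such fractional point, i.e.\ the second-row function must be strictly minimized over the relevant fiber at an integral vertex. A convenient way around this is to choose the objective so that \emph{exact} representation ($\|Ax-b\|_2=0$) is the decision criterion and to pick the second row so that its value is $0$ only at $0/1$ vectors (for example by also involving a quadratic-like gadget, or by a second Partition-style row with perturbed coefficients); then the ``only if'' direction is immediate. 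I would first try this zero-error formulation, and fall back to the scaling/threshold argument only if the integrality of the second row cannot be arranged with entries polynomially bounded in $\|A\|_\infty$.
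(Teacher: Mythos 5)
The paper does not spell out the reduction (it only remarks that one reduces from Partition), but your proposal as written cannot succeed, and the obstacle is precisely the one you flag in step~(iv) --- it is not a technical nuisance to be engineered away but a structural impossibility. Once you set $\sigma = n$, the constraint $\|x\|_0 \le \sigma$ is vacuous and \eqref{P0} degenerates to $\min\{\|Ax-b\|_2 : x \in [0,1]^n\}$, a convex quadratic program over a box. This is polynomial-time solvable (deciding whether the optimum equals $0$ is just LP feasibility, and the threshold version is convex QP), so no choice of a $2\times n$ integer matrix, target $b$, scaling factor $M$, or threshold can make this formulation NP-hard unless $P=NP$. Concretely, the second row you hope for --- one ``whose value is $0$ only at $0/1$ vectors'' --- does not exist: rows of $A$ are linear functionals, so $\{x \in [0,1]^n : c^\intercal x = \beta\}$ is convex, and as soon as it contains two vertices of the cube (which it must, to accommodate all candidate subsets) it contains the entire fractional segment between them. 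The fractional point with $\sum_i a_i x_i = S$ that worries you really does tie with every integral witness, and a ``quadratic-like gadget'' is unavailable because \eqref{P0} only admits linear measurements.

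The repair is to exploit the support constraint rather than neutralize it: integrality must come from the interplay of $\|x\|_0 \le \sigma$, the box $[0,1]^n$, and an $\ell_1$-type row. Given a Partition instance $a_1,\dots,a_n$ with $\sum_i a_i = 2S$, pad with $n$ zero items, take $2n$ variables, set $\sigma \df n$, and let
\[
A \df \begin{pmatrix} a_1 & \cdots & a_n & 0 & \cdots & 0 \\ 1 & \cdots & 1 & 1 & \cdots & 1 \end{pmatrix}, \qquad b \df \begin{pmatrix} S \\ n \end{pmatrix}.
\]
If $\|Ax-b\|_2 = 0$, then $\mathbf{1}^\intercal x = n$ with at most $n$ nonzero entries each bounded by $1$ forces every nonzero entry to equal $1$, so $x$ is the indicator vector of an $n$-element set whose intersection with $[n]$ sums to $S$; conversely any subset of $[n]$ summing to $S$ extends by zero items to such an $x$. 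Hence the optimum of \eqref{P0} is $0$ iff the Partition instance is a yes-instance, with $m=2$ fixed and all entries polynomially bounded.
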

A simple but important ingredient of our proximity theorem is the following fact that can be derived from elementary linear programming theory.
\begin{lemma}[Few fractional entries]
\label{lem:frac-entries}
\leavevmode
\begin{enumerate}
\item Let $x$ be a feasible point for~\eqref{P0}.
There exists a solution $x^\prime$ such that $Ax = Ax^\prime$ with at most $m$ fractional entries. \label{pt:1}
\item Let $x$ be a feasible point for~\eqref{P1}.
There exists a solution $x^\prime$ such that $Ax = Ax^\prime$ with at most $m$ fractional entries.
\end{enumerate}
\end{lemma}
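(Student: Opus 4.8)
The plan is to treat both statements uniformly by fixing the combinatorial ``shape'' of $x$ and then perturbing only the fractional coordinates along a linear subspace. First I would set $F \df \{i \in [n]: 0 < x_i < 1\}$, the index set of fractional entries, and suppose for contradiction that $|F| > m$. Consider the linear map $x \mapsto Ax$ restricted to the coordinates in $F$: the submatrix $A_F \in \Z^{m \times |F|}$ has more columns than rows, so its kernel is nontrivial. Pick a nonzero $d \in \R^{|F|}$ with $A_F d = 0$, extend it to $\bar d \in \R^n$ by zeros outside $F$, and note that $A \bar d = 0$, so $x$ and $x + t\bar d$ have the same image under $A$ for every $t \in \R$.

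Next I would argue that we can move along $\pm \bar d$ until an additional coordinate becomes integral (i.e.\ hits $0$ or $1$) without leaving the feasible region. Since every coordinate of $x$ indexed by $F$ lies strictly in $(0,1)$, both $x + t\bar d$ and $x - t\bar d$ stay in $[0,1]^n$ for $t$ in a neighbourhood of $0$; let $t^+ > 0$ be the largest value for which $x + t \bar d \in [0,1]^n$ and $t^- > 0$ the analogue for $x - t\bar d$. At least one of these is finite (because $\bar d \neq 0$ and the box is bounded in the relevant directions), and at the corresponding endpoint some coordinate $i \in F$ has been driven to $0$ or $1$, strictly decreasing $|F|$. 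The one remaining thing to check is that the side constraint is preserved: for~\eqref{P0} the support can only shrink, so $\|x\|_0 \le \sigma$ is maintained; for~\eqref{P1} we additionally need $\|x \pm t\bar d\|_1 \le \sigma$. If $\one^\top \bar d = 0$ the $\ell_1$-norm (on the nonnegative box) is unchanged and either direction works; otherwise $\one^\top \bar d \neq 0$, and we choose the sign so that $\one^\top(\pm\bar d) \le 0$, which makes $\sum_i x_i$ nonincreasing and hence keeps $\|x \pm t \bar d\|_1 \le \|x\|_1 \le \sigma$. Iterating this reduction until $|F| \le m$ produces the desired $x'$ with $A x' = A x$.

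I do not anticipate a genuine obstacle here; this is the standard ``basic feasible solution'' argument from linear programming, and the only mildly delicate point is bookkeeping the side constraint in the $\ell_1$ case, which is handled by the sign choice above. One should also note the degenerate possibility that $x + t\bar d$ and $x - t\bar d$ both stay feasible for all $t \ge 0$ along the chosen direction; this cannot happen because $\bar d$ is supported on $F$ where all coordinates are in the open interval $(0,1)$, so moving far enough in either direction must violate a box constraint. Hence the reduction always terminates, and both parts of the lemma follow.
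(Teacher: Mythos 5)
Your proof is correct and is essentially the paper's argument in constructive form: the paper takes a vertex of the polytope $\{y : A_S y = Ax,\ 0 \le y \le 1\}$ and invokes standard LP theory to bound the fractional entries by $m$, whereas you carry out the same purification explicitly by perturbing along kernel directions of $A_F$ until coordinates hit the bounds. The only cosmetic difference is in the $\ell_1$ case, where the paper secures $\|x'\|_1 \le \sigma$ by taking a vertex minimizing $\sum_i y_i$, while you achieve the same effect by choosing the sign of the perturbation so that $\mathbf{1}^\intercal \bar d \le 0$; both are sound.
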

\begin{proof}
\begin{enumerate}
\item Let $x$ be a solution of~\eqref{P0} and denote $S = \supp(x)$.
Let $A_S$ denote the submatrix of $A$ comprising the columns with indices in $S$.
The set
\begin{equation*}
P_S(x) \df \lbrace y \in \mathbb{R}^{|S|}: \ A x = A_S y, \ 0 \leq y \leq 1 \rbrace
\end{equation*}
is a polytope.
It is non-empty since $x \in P$, hence it has at least one vertex $v$.
By standard LP theory, at least $|S|-m$ inequalities of the form $0 \leq y \leq 1$ are tight at $v$.
It follows that $v$ has at most $m$ fractional entries.
The vertex $v$ can easily be extended to a solution $x^\prime$ of~\eqref{P0} by adding zero-entries. 
\item Given the solution $x$ to~\eqref{P1}, consider the optimization problem
\begin{equation*}
\min \{ \sum_{i=1}^n y_i : \ y \in P_{ \{ 1,\dots,n \}}(x) \}.
\end{equation*}
Let $v$ be an optimal vertex solution.
From Part~\ref{pt:1} $v$ has at most $m$ fractional entries.
Since $x$ is feasible for the above problem, $\sum_{i=1}^n v_i \leq \sum_{i=1}^n x_i \leq \sigma$.
\end{enumerate}
\end{proof}

\section{The \texorpdfstring{$\ell_1$}{l_1}-Relaxation for Random Targets \texorpdfstring{$b$}{b}}
\label{sec:sampling}
In order to shed some light on Problem~\eqref{P0} and its natural convex relaxation~\eqref{P1} we first provide a probabilistic analysis to what extend optimal solutions of~\eqref{P1} already solve~\eqref{P0}.
Let $Q \df \{Ax \in \R^m : \  x \in [0,1]^n, \|x\|_1 \leq \sigma\}$ be the set of all points we can represent with the $\ell_1$-relaxation.
This section deals with the question which vectors $b$ are ``easy'' target vectors.
It turns out that if $b$ is ``deep'' inside $Q$ or far outside of $Q$, then the corresponding instances of~\eqref{P0} are easy with very high probability.
In fact, there almost always exist optimal solutions of~\eqref{P1} that are already feasible for~\eqref{P0} and hence optimal.
Conversely, if $b$ is close to the boundary of $Q$, then the probability that an optimal solution of~\eqref{P1} solves~\eqref{P0} is almost $0$.

\begin{theorem}
\label{thm:inside-Q}
Let $A \in \R^{m \times n}$
and $\sigma \geq m$ be an integer.
If $b \in \tfrac{\sigma - m + 1}{\sigma} Q$, then there exists $x^\star \in [0,1]^n$ with $\norm{x^\star}_0 \leq \sigma$ and $Ax^\star = b$.
\end{theorem}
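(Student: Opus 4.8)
The plan is to take the representation of $b$ coming from the scaled body $\tfrac{\sigma-m+1}{\sigma}Q$, observe that rescaling forces its $\ell_1$-norm to be small, and then convert it into a sparse representation using Lemma~\ref{lem:frac-entries}.

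First I would unpack the hypothesis. Since $b \in \tfrac{\sigma-m+1}{\sigma}Q$, there is $y \in [0,1]^n$ with $\norm{y}_1 \leq \sigma$ and $b = \tfrac{\sigma-m+1}{\sigma} A y$. Put $t \df \tfrac{\sigma-m+1}{\sigma}$; because $1 \leq \sigma - m + 1 \leq \sigma$ we have $t \in (0,1]$, so $x \df t y$ lies in $[0,1]^n$, satisfies $Ax = b$, and obeys $\norm{x}_1 = t\norm{y}_1 \leq t\sigma = \sigma - m + 1$. Thus $x$ is feasible for~\eqref{P1} with an $\ell_1$-norm that is smaller than $\sigma$ by essentially $m$.

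Next I would feed $x$ into Lemma~\ref{lem:frac-entries}, part~2. This produces $v \in [0,1]^n$ with $Av = Ax = b$ and at most $m$ fractional entries; moreover, reading off the minimum-sum linear program used in that lemma's proof, the output satisfies $\norm{v}_1 \leq \norm{x}_1 \leq \sigma - m + 1$.

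Finally I would count the support of $v$. Let $k$ be the number of coordinates of $v$ equal to $1$ and $f \leq m$ the number of fractional coordinates, so $\norm{v}_0 = k + f$ while $\norm{v}_1 = k + \sum_{i:\,0 < v_i < 1} v_i$. If $f = 0$, then $\norm{v}_0 = k = \norm{v}_1 \leq \sigma - m + 1 \leq \sigma$. If $f \geq 1$, the fractional part of the sum is strictly positive, so $k < \norm{v}_1 \leq \sigma - m + 1$; since $k$ is an integer this forces $k \leq \sigma - m$, whence $\norm{v}_0 = k + f \leq (\sigma - m) + m = \sigma$. In either case $x^\star \df v$ has the desired properties. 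The only genuinely delicate point is this last step: the crude bound $k \leq \sigma - m + 1$ combined with $f \leq m$ only yields $\norm{v}_0 \leq \sigma + 1$, and it is the case distinction on whether a fractional coordinate is present at all — using integrality of $k$ together with the strict positivity of the fractional contributions — that recovers the sharp bound $\sigma$.
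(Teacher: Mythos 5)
Your proposal is correct and follows essentially the same route as the paper: rescale to get a representation with $\ell_1$-norm at most $\sigma-m+1$, pass to a vertex (via the sum-minimizing LP behind Lemma~\ref{lem:frac-entries}) with at most $m$ fractional entries, and then count integral versus fractional coordinates. Your final case distinction merely spells out more explicitly the step the paper states as ``if there are fractional entries present, we can only have $\sigma-m$ integral entries.''
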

\begin{proof}
If $b \in \tfrac{\sigma - m + 1}{\sigma} Q$,
there exists a vector $\hat{x} \in [0,\tfrac{\sigma-m + 1}{\sigma}]^n$ such that $b = A\hat{x}$ and $\norm{\hat{x}}_1 \leq \sigma-m + 1$.
Let $v$ be a vertex of $\lbrace x \in [0,1]^n:  Ax = b,\norm{x}_1 \leq \sigma - m + 1\rbrace$, which contains $\hat{x}$.
According to the constraint $\norm{x}_1 \leq \sigma - m + 1$, $v$ has at most $\sigma - m + 1$ integral non-zero entries.
By Lemma~\ref{lem:frac-entries},
$v$ has at most $m$ fractional entries.
However, if there are fractional entries present, we can only have $\sigma-m$ integral entries,
thus, $\|v\|_0 \leq \sigma$.
\end{proof}

For the next result, we require some theory of \emph{mixed volumes}.
We will restrict to stating results necessary for our work, and refer to~\cite[Sec.\ $6$]{gruber2007convex} and references therein for more background.
Let $B \df B(0,1) \subseteq \R^m$ be the Euclidean ball of radius $1$, $Q \subseteq \R^m$ be a convex body and $\lambda > 0$.
The following equality is known as \emph{Steiner's formula}, and shows that the volume of $Q + \lambda B$ is a polynomial in $\lambda$ (\cite[Thm.~$6.6$]{gruber2007convex}):
\begin{align}
\label{eq:steiner}
\vol (Q + \lambda B) &= \sum_{i=0}^m \binom{m}{i} W_i(Q) \lambda^i, 
\end{align}
where the constants $W_i(Q)$ are called the \emph{Quermassintegrals} of $Q$.
Let $\mu > 0$ be a constant s.t.\ $Q \subseteq \mu B$.
Then,
$W_i(Q) \leq \mu^{m - i} \vol(B)$.
Combining the two inequalities, we have
\begin{align}
\label{eq:volume}
\vol (Q + \lambda B) &\leq \vol(B) \sum_{i=0}^m \binom{m}{i} \mu^{m-i} \lambda^i = \vol(B) (\lambda + \mu)^m
\end{align}
for any convex body $Q \subseteq \mu B$.
We are now prepared to show the following result.
\begin{theorem}
\label{thm:b-far}
Let $A \in \mathbb{R}^{m \times n}$,
and $\sigma \geq 1$ be an integer.
If $b$ is sampled uniformly at random from the convex set
$Q + \lambda B$, then with probability at least 
\begin{equation*}
\left( \frac{\lambda}{\lambda + \sigma \sqrt{m} \norm{A}_{\infty}} \right)^m 
\end{equation*}
there exists an optimal solution of~\eqref{P1}
that is optimal for~\eqref{P0}.
\end{theorem}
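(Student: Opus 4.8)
The plan is to exhibit a subset $S\subseteq Q+\lambda B$ of volume at least $\vol(B)\lambda^m$ on which some optimal solution of~\eqref{P1} is automatically optimal for~\eqref{P0}, and then to compare $\vol(S)$ with $\vol(Q+\lambda B)$ using~\eqref{eq:volume}. To that end I would first record a reduction. Put $K\df\{x\in[0,1]^n:\norm{x}_1\le\sigma\}$, so $Q=\{Ax:x\in K\}$, and let $\pi_Q(b)$ denote the (unique) Euclidean projection of $b$ onto the convex body $Q$. Every optimal solution $\hat x$ of~\eqref{P1} satisfies $A\hat x=\pi_Q(b)$, and conversely every $x\in K$ with $Ax=\pi_Q(b)$ is optimal for~\eqref{P1}. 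Hence it suffices to produce some $x\in[0,1]^n$ with $Ax=\pi_Q(b)$ and $\norm{x}_0\le\sigma$: such an $x$ is feasible for~\eqref{P0} and optimal for~\eqref{P1}, and since the feasible region of~\eqref{P0} is contained in $K$ (because $\norm{x}_1\le\norm{x}_0$ for $x\in[0,1]^n$) its image under $A$ lies in $Q$, so the optimal value of~\eqref{P0} is at least that of~\eqref{P1}; therefore $x$ is optimal for~\eqref{P0} as well.

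Such an $x$ is available whenever $\pi_Q(b)$ is a \emph{vertex} of the polytope $Q$. Indeed, a vertex $w$ of $Q=A(K)$ is the image $w=Av$ of some vertex $v$ of $K$, and the vertices of $K=[0,1]^n\cap\{x:\norm{x}_1\le\sigma\}$ are exactly the $0/1$-vectors with at most $\sigma$ ones: at a vertex, $n$ linearly independent constraints among $\{0\le x_i\le 1\}_i$ and $\sum_i x_i\le\sigma$ are tight, so at least $n-1$ coordinates lie in $\{0,1\}$; and if exactly one coordinate were strictly fractional the tight constraint $\sum_i x_i=\sigma$ would force it to be an integer, a contradiction. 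Thus $\norm{v}_0\le\sigma$, so $v$ does the job.

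It remains to bound from below the probability that $\pi_Q(b)$ is a vertex of $Q$. For a vertex $w$ of $Q$ let $N_Q(w)\df\{c\in\R^m:\langle c,y-w\rangle\le 0\text{ for all }y\in Q\}$ be the outer normal cone; then $w+N_Q(w)=\{b\in\R^m:\pi_Q(b)=w\}$, so the sets $S_w\df w+(N_Q(w)\cap\lambda B)$ are pairwise disjoint, and each $b\in S_w$ satisfies $\pi_Q(b)=w$ and $\dist(b,Q)=\norm{b-w}_2\le\lambda$, whence $S_w\subseteq Q+\lambda B$. Moreover $\bigcup_w N_Q(w)=\R^m$: for every $c\in\R^m$ the face $\arg\max_{y\in Q}\langle c,y\rangle$ contains a vertex $w$, and then $c\in N_Q(w)$. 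Consequently, setting $S\df\bigcup_w S_w$,
\begin{equation*}
\vol(S)=\sum_w\vol\big(N_Q(w)\cap\lambda B\big)\ \ge\ \vol\Big(\big(\textstyle\bigcup_w N_Q(w)\big)\cap\lambda B\Big)=\vol(\lambda B)=\vol(B)\lambda^m .
\end{equation*}
Finally $Q\subseteq\mu B$ with $\mu\df\sigma\sqrt m\,\norm{A}_\infty$, since $\norm{Ax}_2\le\sqrt m\,\norm{Ax}_\infty\le\sqrt m\,\norm{A}_\infty\norm{x}_1\le\sigma\sqrt m\,\norm{A}_\infty$ for $x\in K$, so~\eqref{eq:volume} gives $\vol(Q+\lambda B)\le\vol(B)(\lambda+\mu)^m$, and the probability in question is at least $\vol(S)/\vol(Q+\lambda B)\ge(\lambda/(\lambda+\mu))^m$.

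The part that needs care is the combinatorial description of the vertices of $K$ (which forces the pre-images of the vertices of $Q$ to be sparse) together with the covering identity $\bigcup_w N_Q(w)=\R^m$ for the vertex normal cones — equivalently, the fact that for almost every $b$ the projection $\pi_Q(b)$ lands on a vertex of $Q$. Both are standard, but together they are exactly what converts the event ``\eqref{P1} solves \eqref{P0}'' into the clean volume comparison above; the degenerate cases $\lambda=0$ and $Q$ not full-dimensional are to be read as the evident limits.
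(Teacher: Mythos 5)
Your proposal is correct and follows essentially the same route as the paper: vertices of $Q$ are images of integral vertices of $P=\{x\in[0,1]^n:\norm{x}_1\le\sigma\}$ (hence $\sigma$-sparse), the normal cones of the vertices tile $\R^m$ so the favourable region in $Q+\lambda B$ has volume at least $\lambda^m\vol(B)$, and Steiner's formula with $Q\subseteq\sigma\sqrt m\,\norm{A}_\infty B$ gives the denominator. The only cosmetic differences are that you phrase optimality via the projection $\pi_Q(b)$ rather than the direct inner-product computation, and that you spell out the vertex description of $P$ and the inclusion of \eqref{P0}'s feasible region in that of \eqref{P1}, which the paper leaves implicit.
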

\begin{proof}
Define $P \df \{x \in [0,1]^n : \ \norm{x}_1 \leq \sigma \}$, and set $Q = \{Ax: \ x \in P\}$.
Observe that all vertices of $P$ are in $\{0,1\}^n$, and as a consequence any vertex $v$ of $Q$ can be written as 
\begin{equation}
\label{eq:int-vertex}
v = Ax \text{ with } x \text{ a vertex in } P \text{ that is integral.}
\end{equation}
Hence, whenever an optimal solution to $\min \{\|b-x\|_2: \ x \in Q\}$ is attained by a vertex of $Q$, the problem~\eqref{P1} has an optimal integral vertex solution $v$. 
Since an integral solution to~\eqref{P1} is also feasible for~\eqref{P0}, the vector $v$ is also optimal for~\eqref{P0}.

Let $V$ be the vertex set of $Q$.
For $v \in V$, denote the normal cone of $v$ by 
\begin{equation*}
C_v \df \{c \in \R^m: \ c^\intercal (w-v) \leq 0 \ \forall w \in Q \}.
\end{equation*}
Fix a vertex $v$ and assume $b \in v + C_v$.
We next show that $v$ is an optimal solution to $\min\{\|b-x\|^2_2: \ x \in Q\}$.
Since $b = v + c$ with $c^\intercal (v-w) \geq 0$ for all $w \in Q$, we obtain
\[
\norm{b-w}^2_2 = \norm{v-w+c}_2^2 = \norm{v-w}^2_2 + \norm{c}^2_2 + 2 c^\intercal (v-w) \geq \norm{c}^2_2 = \norm{b-v}^2_2,
\]
showing that $v$ is optimal.
By Equation~\eqref{eq:int-vertex} there exists an integral $x \in P$ such that $v = Ax$ and hence $x$ is optimal for~\eqref{P0}.
It remains to calculate the probability that $b \in v + C_v$ for some vertex $v$ of $Q$.
We obtain
\begin{align*}
\vol \left( \bigcup_{v \in V} (v + C_v) \cap (Q + \lambda B) \right)
&= \vol \left( \bigcup_{v \in V} C_v \cap \lambda B \right) \\
&= \vol \left( \left( \bigcup_{v \in V} C_v \right) \cap \lambda B \right) \\
&= \vol(\lambda B) = \lambda^m \vol(B).
\end{align*}
In the second to last equality we used that the normal cones $C_v$ tile the space $\R^m$.

Let $\mu > 0$ be a constant s.t.\ $Q \subseteq \mu B$, e.g.\ $\mu = \sigma \sqrt{m} \norm{A}_{\infty}$.
We now can apply Steiner's Formula~\eqref{eq:steiner} to estimate $\vol(Q + \lambda B) \leq (\lambda + \mu)^m \vol(B)$.
The probability that $b$ is sampled in one of the normal cones is therefore
\begin{align*}
\frac{\vol(\lambda B)}{\vol (Q + \lambda B)} \geq \frac{\lambda^m}{(\lambda + \mu)^m}
\geq \left( \frac{\lambda}{\lambda + \sigma \sqrt{m} \norm{A}_{\infty}} \right)^m. 
\end{align*}
\end{proof}
Let us briefly comment on the probability quantity $\rho \df ( \lambda / (\lambda + \sigma \sqrt{m} \norm{A}_{\infty}) )^m$.
If we choose $\lambda = 2m^{3/2}\sigma \|A\|_{\infty}$ in Theorem~\ref{thm:b-far}, then $\rho \geq 1/2$, as one can verify with Bernoulli's inequality. 
Figure \ref{figure quality of target} depicts the geometry underlying the proof of Theorem \ref{thm:b-far}.
The vector $b_1$ is sampled from the dotted area and hence, an optimal solution of~\eqref{P1} may use $2$ fractional entries, and thus have support $\sigma+1$.
On the other hand, the vector $b_2$ is sampled from the dashed area, which leads to the solution of~\eqref{P1} corresponding to a vertex of $Q$.
In the second case~\eqref{P1} has an integral solution, which automatically solves~\eqref{P0}.

\begin{figure}
\centering
\begin{tikzpicture}[scale=0.6, vert/.style args={of #1 at #2}{insert path={%
#2 -- (intersection cs:first
  line={#1}, second line={#2--($#2+(0,10)$)}) }},
vert outwards/.style args={from #1 by #2 on line to #3}{insert path={
#1 -- ($#1!#2!90:#3$)
}}]

\coordinate (v1) at (-5,-3);
\coordinate (v2) at (-2,-2);
\coordinate (v3) at (0,2);
\coordinate (v4) at (-2,3);
\coordinate (v5) at (-5,2);
\coordinate (b1hat) at (-5,0);
\coordinate (b1) at (-6, 0);
\coordinate (M) at (1,2);

\draw[fill = gray, opacity = 0.1] (v1) -- (v2) -- (v3) -- (v4) -- (v5) -- cycle;
\draw [dashed] (b1)--(b1hat);

\node at (v3) [left = .5mm of v3] {$A\hat{x}_2$};

\draw (v1)--(v2);
\draw (v2)--(v3);
\draw (v3)--(v4);
\draw (v4)--(v5);
\draw (v5)--(v1);


\coordinate (v51) at ($(v5)!1cm!90:(v4)$);
\coordinate (v52) at ($(v5)!1cm!-90:(v1)$);

\coordinate (v41) at ($(v4)!1cm!90:(v3)$);
\coordinate (v42) at ($(v4)!1cm!-90:(v5)$);


\coordinate (v31) at ($(v3)!1cm!90:(v2)$);
\coordinate (v32) at ($(v3)!1cm!-90:(v4)$);
\draw[dashed] (M)--(v3);
\node [right = .5mm of M] at (M) {$b_2$};

\coordinate (v21) at ($(v2)!1cm!90:(v1)$);
\coordinate (v22) at ($(v2)!1cm!-90:(v3)$);

\draw[pattern=north west lines, pattern color=gray] (v32).. controls +($.16*(v32)-.16*(v41)$) and +($.16*(v31)-.16*(v22)$) .. (v31) -- (v3) -- cycle;

\coordinate (v11) at ($(v1)!1cm!90:(v5)$);
\coordinate (v12) at ($(v1)!1cm!-90:(v2)$);

\node at (-3,0) {$Q$};   

\draw[pattern=north west lines, pattern color=gray] (v51).. controls +($.1*(v51)-.1*(v42)$) and +($.1*(v52)-.1*(v11)$) .. (v52) -- (v5) -- cycle;  

\draw[pattern=north west lines, pattern color=gray] (v11).. controls +($.16*(v11)-.16*(v52)$) and +($.18*(v12)-.18*(v21)$) .. (v12) -- (v1) -- cycle;

\draw[pattern=north west lines, pattern color=gray] (v21).. controls +($.1*(v21)-.1*(v12)$) and +($.05*(v22)-.05*(v31)$) .. (v22) -- (v2) --cycle;

\draw[pattern=north west lines, pattern color=gray] (v41).. controls +($.1*(v41)-.1*(v32)$) and +($.1*(v42)-.1*(v51)$) .. (v42) -- (v4) -- cycle;

\draw[pattern = dots, pattern color = gray] (v41) -- (v4) -- (v3) -- (v32) -- cycle;

\draw[pattern = dots, pattern color = gray] (v31) -- (v3) -- (v2) -- (v22) -- cycle;

\draw[pattern = dots, pattern color = gray] (v21) -- (v2) -- (v1) -- (v12) -- cycle;

\draw[pattern = dots, pattern color = gray] (v11) -- (v1) -- (v5) -- (v52) -- cycle;

\draw[pattern = dots, pattern color = gray] (v51) -- (v5) -- (v4) -- (v42) -- cycle;

\foreach \p in {v1,v2,v3,v4, v5, b1, b1hat, v11, v12, v21, v22, v31, v32, v41, v42, v51, v52, M}
\fill (\p) circle(1pt);
\node at (b1)[ left = .5mm of b1]{$b_1$};
\node at (b1hat) [right = .5mm of b1hat] {$A\hat{x}_1$};

\end{tikzpicture}
\caption[The sampling of $b$]{The sampling of the vector $b$ from $Q+\lambda B$}
\label{figure quality of target}
\end{figure}

\section{Proximity between Optimal Solutions of~\eqref{P0} and~\eqref{P1}}
\label{sec:proximity}
In this section we illuminate the Problems~\eqref{P0} and~\eqref{P1} from a deterministic point of view and develop worst-case bounds for the distance of the images of corresponding optimal solutions under $A$.
Our point of departure is an optimal solution $\hat{x}$ of~\eqref{P1}.
The target is to show that there exists an optimal solution $x^\star$ of~\eqref{P0} satisfying $\norm{A(\hat{x} - x^\star)}_2 \leq 2m^{3/2} \norm{A}_\infty$.
Our strategy is to define a hyperplane containing $A\hat{x}$ in the space of target vectors $b$ that separates $b$ from all vectors $Ax$ with $x$ feasible for~\eqref{P0}.
The next step is to show that if we perturb $\hat{x}$ along the fractional variables, we will remain in this hyperplane.
This has the side-effect that we can find a feasible solution for~\eqref{P0} whose image is in the vicinity of $A \hat{x}$.
The triangle inequality and basic geometry then come into play to establish the claimed bound.

We introduce the hyperplane tangent to the ball $B(b,\|b-A\hat{x}\|_2)$ in $A\hat{x}$,
\begin{equation*}
H \df \{ y \in \R^m: (b - A\hat{x})^\intercal y = (b - A\hat{x})^\intercal A\hat{x} \}.
\end{equation*}
\begin{lemma}
\label{lem:sep}
We have $(b - A\hat{x})^\intercal (A x - A\hat{x}) \leq 0$ for any point $x$ feasible for~\eqref{P1}.
\end{lemma}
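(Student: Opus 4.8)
The statement is the first–order optimality condition for the convex minimization problem underlying~\eqref{P1}, rephrased in the space of target vectors. The plan is to work with the point $z \df A\hat{x}$, observe that $z$ is the point of the convex body $Q = \{Ax : x \in P\}$ (with $P \df \{x \in [0,1]^n : \norm{x}_1 \leq \sigma\}$) nearest to $b$ in Euclidean norm, and then extract the normal-cone inequality $(b-z)^\intercal(w-z) \leq 0$ for every $w \in Q$. Since any $x$ feasible for~\eqref{P1} satisfies $Ax \in Q$, specializing $w = Ax$ gives exactly the claimed inequality.

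First I would justify that $z = A\hat{x}$ minimizes $\norm{b - y}_2$ over $y \in Q$: this is immediate because $y \in Q$ iff $y = Ax$ for some $x \in P$, and $\hat{x}$ minimizes $\norm{Ax - b}_2$ over $P$ by assumption; minimizing $\norm{\cdot}_2$ and $\norm{\cdot}_2^2$ amounts to the same thing. Next, fix an arbitrary feasible $x$ for~\eqref{P1}, set $w = Ax \in Q$, and consider the segment $z_t \df z + t(w - z)$ for $t \in [0,1]$; by convexity of $P$ (hence of $Q$) we have $z_t \in Q$, so $\norm{b - z_t}_2^2 \geq \norm{b - z}_2^2$. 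Expanding,
\[
\norm{b - z_t}_2^2 = \norm{b - z}_2^2 - 2t\,(b-z)^\intercal(w-z) + t^2 \norm{w - z}_2^2,
\]
so $-2t\,(b-z)^\intercal(w-z) + t^2\norm{w-z}_2^2 \geq 0$ for all $t \in (0,1]$. Dividing by $t$ and letting $t \to 0^+$ yields $(b-z)^\intercal(w-z) \leq 0$, which is precisely $(b - A\hat{x})^\intercal(Ax - A\hat{x}) \leq 0$.

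I do not expect a genuine obstacle here; the only points needing a word of care are (i) that passing from $\norm{Ax-b}_2$ to $\norm{Ax-b}_2^2$ preserves optimality of $\hat{x}$, and (ii) that $Q$ is convex so that the whole segment $z_t$ lies in $Q$ and the competitor inequality $\norm{b-z_t}_2^2 \geq \norm{b-z}_2^2$ is legitimate. If one prefers a one-line alternative, the same conclusion follows by noting that $g(x) \df \norm{Ax-b}_2^2$ is convex and differentiable with $\nabla g(\hat{x}) = 2A^\intercal(A\hat{x}-b)$, so the standard variational inequality $\nabla g(\hat{x})^\intercal(x-\hat{x}) \geq 0$ for all feasible $x$ rearranges to the claim. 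I would present the elementary segment argument in the paper since it keeps the exposition self-contained.
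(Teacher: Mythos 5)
Your proposal is correct and is essentially the paper's own argument: both expand $\norm{b - A(\hat{x} + t(x-\hat{x}))}_2^2$ along the segment joining $A\hat{x}$ to $Ax$ and conclude from optimality of $\hat{x}$ as $t \to 0^+$; the paper merely phrases this as a proof by contradiction rather than a direct limit. No substantive difference.
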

\begin{proof}
Assume that there exists a point $x$ feasible for~\eqref{P1} for which the inequality $(b - A\hat{x})^\intercal (Ax - A\hat{x}) > 0$ holds.
As a convex combination the point $p \df A(\hat{x} + \varepsilon (x - \hat{x}))$ is feasible for~\eqref{P1} for each $\varepsilon \in [0,1]$, and we can estimate the objective value as
\[
\| b - p \|_2^2 = \| b-A\hat{x}\|^2_2 + \varepsilon^2 \| A(x-\hat{x}) \|_2^2 - 2\varepsilon (b-A\hat{x})^\intercal (Ax - A\hat{x}) < \|b - A \hat{x} \|_2^2
\]
for $\varepsilon$ small enough.
This contradicts the optimality of $\hat{x}$.
We illustrated the argument geometrically in Figure~\ref{fig:hyperplane}.
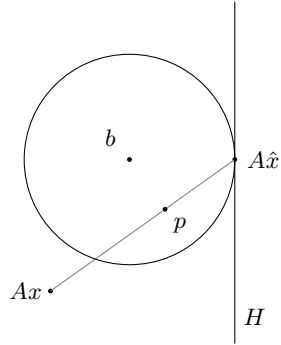
\begin{figure}
\centering
\begin{tikzpicture}[scale=0.7]
\coordinate (b) at (0, 0);
\draw[fill, black] (b) circle[radius=1pt];
\node at (b) [above left = 1mm of b] {$b$};

\coordinate (bhat) at (2,0);
\draw[fill, black] (bhat) circle[radius=1pt];
\node at (bhat) [right = .5mm of bhat] {$A\hat{x}$};

\coordinate (H) at (2, -3);
\node at (H) [right == 1mm of H] {$H$};

\coordinate (bstar) at (-1.5, -2.5);
\draw[fill, black] (bstar) circle[radius = 1pt];
\node at (bstar) [left == 1mm of bstar] {$Ax$};

\draw[gray] (bstar)--(bhat);

\coordinate (p) at ($(bstar)!(b)!(bhat)$);
\draw[fill, black] (p) circle[radius = 1pt];
\node at (p) [below right == 1mm of p] {$p$};

\draw (2,3)--(2,-3.5);

\draw (b) circle(2);

\end{tikzpicture}
\caption[The separation argument]{
If $H$ does not separate $b$ from $Ax$, there is a point $p$ closer to $b$ than $A\hat{x}$.
}
\label{fig:hyperplane}
\end{figure}
\end{proof}
An important property is that $H$ contains many points that we can easily generate from $\hat{x}$.
This is made precise below.
\begin{lemma}
\label{lem:H-affine}
Define $\mathcal{F} \df \{i \in [n]: \ \hat{x}_i \notin \Z\}$ where $\hat{x}$ is an optimal solution to~\eqref{P1}.
We have
\begin{equation*}
A\hat{x} + \left\{ \sum_{i \in \mathcal{F}} \lambda_i A_i: \ \sum_{i \in \mathcal{F}} \lambda_i = 0 \right\}
\subseteq H.
\end{equation*}
\end{lemma}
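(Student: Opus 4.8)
The plan is to show that every point of the form $A\hat{x} + \sum_{i \in \mathcal{F}} \lambda_i A_i$ with $\sum_{i \in \mathcal{F}} \lambda_i = 0$ lies on the hyperplane $H$, i.e.\ that $(b - A\hat{x})^\intercal \bigl( \sum_{i \in \mathcal{F}} \lambda_i A_i \bigr) = 0$ whenever $\sum_{i \in \mathcal{F}} \lambda_i = 0$. The key observation is that, because $\hat x_i$ is strictly fractional for every $i \in \mathcal{F}$, there is some $\delta > 0$ such that for all vectors $\lambda$ supported on $\mathcal{F}$ with $\|\lambda\|_\infty \le \delta$, the perturbed point $\hat{x} + \sum_{i \in \mathcal{F}} \lambda_i e_i$ still lies in $[0,1]^n$; and if in addition $\sum_{i \in \mathcal{F}} \lambda_i = 0$ then $\|\hat{x} + \sum_i \lambda_i e_i\|_1 = \|\hat{x}\|_1 \le \sigma$, so the perturbed point is feasible for~\eqref{P1}.

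First I would fix such a $\lambda$ with $\sum_{i\in\mathcal F}\lambda_i = 0$ and small enough $\ell_\infty$-norm, and let $x^{\pm} \df \hat{x} \pm \sum_{i \in \mathcal{F}} \lambda_i e_i$. Both $x^+$ and $x^-$ are feasible for~\eqref{P1} by the argument above. Applying Lemma~\ref{lem:sep} to each of them gives
\begin{equation*}
(b - A\hat{x})^\intercal \Bigl( \sum_{i \in \mathcal{F}} \lambda_i A_i \Bigr) \le 0
\quad\text{and}\quad
-(b - A\hat{x})^\intercal \Bigl( \sum_{i \in \mathcal{F}} \lambda_i A_i \Bigr) \le 0,
\end{equation*}
since $A x^{+} - A\hat{x} = \sum_i \lambda_i A_i$ and $A x^{-} - A\hat{x} = -\sum_i \lambda_i A_i$. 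Hence $(b - A\hat{x})^\intercal \sum_{i \in \mathcal{F}} \lambda_i A_i = 0$, which is exactly the statement that $A\hat{x} + \sum_i \lambda_i A_i \in H$ for all such small $\lambda$.

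To finish, I would remove the smallness restriction by a scaling argument: the set $L \df \{ \sum_{i \in \mathcal{F}} \lambda_i A_i : \sum_{i \in \mathcal{F}} \lambda_i = 0 \}$ is a linear subspace of $\R^m$, and we have just shown that a full-dimensional neighborhood of $0$ within the parameter set $\{\lambda : \operatorname{supp}(\lambda) \subseteq \mathcal{F}, \sum_i \lambda_i = 0\}$ maps into the linear subspace $\{ y : (b-A\hat x)^\intercal y = 0\}$; since the map $\lambda \mapsto \sum_i \lambda_i A_i$ is linear, its image of the whole hyperplane $\{\sum_i\lambda_i=0\}$ is contained in that same subspace, and translating by $A\hat x$ gives containment in $H$. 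I do not expect a serious obstacle here; the only point requiring a little care is verifying that $\delta>0$ can indeed be chosen uniformly — this uses precisely that $\mathcal{F}$ consists of indices with $\hat x_i \in (0,1)$, so each coordinate has room to move in both directions, and that $\mathcal{F}$ is finite.
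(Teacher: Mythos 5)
Your proposal is correct and follows essentially the same route as the paper: perturb $\hat{x}$ in both directions $\pm\sum_{i\in\mathcal{F}}\lambda_i e_i$ (using that the fractional coordinates have room to move and that $\sum_i\lambda_i=0$ preserves the $\ell_1$-constraint), apply Lemma~\ref{lem:sep} to both perturbations to get the two opposite inequalities, and conclude $(b-A\hat{x})^\intercal\sum_i\lambda_i A_i=0$, extending to arbitrary $\lambda$ by linearity. The paper phrases the scaling step by fixing $y$ first and choosing $\varepsilon>0$ so that $\hat{x}\pm\varepsilon y$ is feasible, which is the same argument in a slightly different order.
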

\begin{proof}
Let $v \df b - A\hat{x} \in \R^m$ be the normal vector of $H$ and let
\begin{equation*}
y = \sum_{i\in \mathcal{F}} \lambda_i e_i \quad \text{ for some } \lambda_i \in \R \text{ with } \quad  \sum_{i \in \cF} \lambda_i = 0.
\end{equation*}
Since $x_i \in (0,1)$ for all $i \in \mathcal{F}$, there exists $\varepsilon >0$ such that both points
$\hat{x} + \varepsilon y$ and $\hat{x} - \varepsilon y$ are feasible for~\eqref{P1}.
By Lemma~\ref{lem:sep}, we must have $v^\intercal A(\hat{x} +\varepsilon y - \hat{x}) = v^\intercal A \varepsilon y \leq 0$ and $-v^\intercal A \varepsilon y \leq 0$, resulting in $v^\intercal Ay = 0$.
Thus, $A(\hat{x} + y) \in H$.
\end{proof}
With these results we are now able to show a proximity result (Theorem~\ref{thm:proximity}) between $A\hat{x}$ and $Ax^\star$.
Here, $\lfloor \hat{x} \rfloor$ denotes the vector $\hat{x}$ rounded down component-wise.
\begin{proof}[Theorem~\ref{thm:proximity}]
Given an optimal solution $\hat{x}$ of~\eqref{P1}, let $\mathcal{F} = \{i \in [n]: \ x_i \in (0,1)\}$.
Without loss of generality, we may assume that $|\mathcal{F}| \leq m$ and $\mathcal{F} = \{1,2,\dots,|\mathcal{F}|\}$.
Let $k \df \sum_{i \in \mathcal{F}} \hat{x}_i$, and construct a feasible solution $y$ for~\eqref{P0} from $\hat{x}$ as follows:
\begin{equation*}
y_i:=\begin{cases}
1, & 1\leq i \leq \lfloor k \rfloor \\
k-\lfloor k \rfloor, & i = \lceil k \rceil \\
0, & \lceil k \rceil +1 \leq i \leq |\mathcal{F}|\\
\hat{x}_i, & i \notin \mathcal{F}.
\end{cases}
\end{equation*}
The point $y$ satisfies $0 \leq y_i \leq 1$ for all $i \in \lbrack n \rbrack$ and $\norm{y}_0 = \lceil \norm{\hat{x}}_1 \rceil \leq \sigma$.
Since 
\begin{equation}
\label{eq:affine-diff}
\sum_{i \in \mathcal{F}} y_i = \sum_{i \in \mathcal{F}} \hat{x}_i,
\end{equation}
Lemma~\ref{lem:H-affine} implies that $Ay \in H$ and hence
\begin{equation*}
\norm{b - Ay}_2^2 = \norm{b - A\hat{x}}_2^2 + \norm{A \hat{x} - A y}_2^2.
\end{equation*}
Assume $\norm{Ax^\star - A\hat{x}}_2 \geq \norm{Ay - A\hat{x}}_2$ holds for the optimal solution $x^\star$ for~\eqref{P0}.
Since $y$ is feasible for~\eqref{P0}, we also know $\norm{b- Ay}_2 \geq \norm{b-Ax^\star}_2$.
We are now prepared to estimate (using Lemma~\ref{lem:sep} in the third line)
\begin{align*}
\norm{b - Ax^\star}_2^2 &= \norm{b - A\hat{x} + A\hat{x}- Ax^\star}_2^2 \\
	&= \norm{b - A\hat{x}}_2^2 + \norm{A\hat{x}- Ax^\star}_2^2 + 2(b - A\hat{x})^\intercal (A\hat{x}- Ax^\star) \\
	&\geq \norm{b - A\hat{x}}_2^2 + \norm{A\hat{x}- Ax^\star}_2^2 \\
	&> \norm{b - A\hat{x}}_2^2 + \norm{A\hat{x}- Ay}_2^2 \\
	&= \norm{b-Ay}_2^2,
\end{align*}
showing that $x^\star$ is not optimal.
The proof is finished by observing that Equation~\eqref{eq:affine-diff} also implies 
$\norm{\hat{x}-y}_1 \leq 2 \norm{\hat{x} - \lfloor \hat{x} \rfloor}_1$, and consequently 
\begin{equation*}
\norm{Ay - A\hat{x}}_2 \leq 2\norm{\hat{x} - \lfloor \hat{x} \rfloor}_1 \max_{i=1,\dots,n} \norm{A_i}_2 \leq 2m^{3/2} \norm{A}_{\infty}.
\end{equation*}
\qed
\end{proof}

\section{A Deterministic Algorithm}
\label{sec:algorithm}
The results presented so far give rise to a conceptually simple algorithm.
Compute an optimal solution $\hat{x}$ to~\eqref{P1}.
According to the proximity theorem in Section~\ref{sec:proximity}, we can limit our search for an optimal right-hand side vector $b^\star = Ax^\star$ in the vicinity of $A\hat{x}$.
Since $b^\star$ might be fractional, we cannot enumerate all possible right-hand sides.
Instead, we refine our approach by decomposing $x^\star = z^\star + f^\star$ into its integral part $z^\star$ and its fractional part $f^\star$.
We first guess the support $\cF$ of the fractional entries, which satisfies $|\cF| \leq m$ by Lemma~\ref{lem:frac-entries}.
For the remaining variables, we next establish a candidate set $Z^\star$ comprising the potential vectors $z^\star$ in the decomposition of $x^\star$.
It will be essential to determine a bound on $|Z^\star|$.
This is where the proximity theorem comes into play.
We now enumerate the elements of $Z^\star$ and extend each of them by a vector $f^\star$ whose support is in the index set $\cF$ that we guessed upfront.
A composition of these two solutions will provide $x^\star$.

This section is devoted to analyze this conceptually simple algorithm and this way shed some light on some of the details required.

Before we describe the decomposition $x^\star = z^\star + f^\star$ in more detail, we discuss the standard obstacle in convex optimization that $\hat{x}$ can only be approximated.
To be more precise, we call a solution $\bar{x}$ to~\eqref{P1} \emph{$\varepsilon$-close}, if 
\begin{equation}
\label{eq:eps-close}
\|b - A \bar{x}\|_2^2 - \|b - A\hat{x}\|^2_2   \leq  \varepsilon^2. 
\end{equation}

We obtain a canonical corollary from the proximity Theorem~\ref{thm:proximity}.
\begin{corollary}
\label{cor:proximity}
Let $\bar{x}$ be an $\varepsilon$-close solution of~\eqref{P1}.
\begin{enumerate}
\item There exists an optimal solution $x^\star$ of~\eqref{P0} satisfying 
\begin{equation*}
\norm{Ax^\star - A\bar{x}}_\infty \leq 2 m^{3/2} \norm{A}_{\infty} + \varepsilon.
\end{equation*}
\item The integral part $z^\star$ of $x^\star$ satisfies \label{pt:epsilon-2}
\begin{equation*}
\norm{A z^\star - A \bar{x}}_{\infty} \leq 3 m^{3/2} \norm{A}_{\infty} + \varepsilon.
\end{equation*}
\end{enumerate}
\end{corollary}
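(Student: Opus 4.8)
The plan is to first upgrade the proximity Theorem~\ref{thm:proximity} from an exact optimal solution $\hat{x}$ to an $\varepsilon$-close solution $\bar{x}$, and then to pass from $x^\star$ to its integral part $z^\star$ by controlling the fractional part separately. For part~(1), I would revisit the proof of Theorem~\ref{thm:proximity} and track where optimality of $\hat{x}$ was used. It entered in two places: in Lemma~\ref{lem:sep} (the separating hyperplane $H$ through $A\hat{x}$) and, implicitly, through the chain of inequalities showing that a candidate $y$ that is too far from $A\hat{x}$ would beat $x^\star$. When $\hat{x}$ is replaced by $\bar{x}$, the hyperplane argument no longer gives exact orthogonality; instead $(b - A\bar{x})^\intercal(Ax - A\bar{x})$ is bounded by a quantity of order $\varepsilon^2$ for feasible $x$, coming directly from the defining inequality~\eqref{eq:eps-close} expanded as in the proof of Lemma~\ref{lem:sep}. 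Carrying this slack through the Pythagorean-type estimate, I expect to recover $\norm{Ax^\star - A\bar{x}}_2 \leq 2m^{3/2}\norm{A}_\infty + \varepsilon$ (perhaps after absorbing lower-order terms), and then $\norm{\cdot}_\infty \leq \norm{\cdot}_2$ gives the stated bound. The cleanest route is probably to apply Theorem~\ref{thm:proximity} verbatim to a genuine optimal $\hat{x}$ and then add the triangle-inequality term $\norm{A\hat{x} - A\bar{x}}_2$, bounding this last term by $\varepsilon$ using that both $\hat{x}$ and $\bar{x}$ are near-minimizers of a convex (indeed, with $\norm{A(\hat x - \bar x)}_2$ controlled by the gap in squared objective values via the identity $\norm{b-A\bar x}_2^2 = \norm{b - A\hat x}_2^2 + \norm{A\hat x - A\bar x}_2^2 + 2(b-A\hat x)^\intercal(A\hat x - A\bar x)$ together with Lemma~\ref{lem:sep}, which forces the cross term to be nonnegative and hence $\norm{A\hat x - A\bar x}_2^2 \leq \varepsilon^2$).

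For part~(2), write $x^\star = z^\star + f^\star$ with $z^\star = \lfloor x^\star\rfloor$ integral and $f^\star = x^\star - \lfloor x^\star\rfloor$ the fractional part. By Lemma~\ref{lem:frac-entries}\eqref{pt:1} we may take $x^\star$ to have at most $m$ fractional entries, each in $[0,1)$, so $\norm{f^\star}_1 \leq m$ and therefore $\norm{Af^\star}_\infty \leq \norm{f^\star}_1 \max_i \norm{A_i}_\infty \leq m\norm{A}_\infty \leq m^{3/2}\norm{A}_\infty$. Then
\begin{equation*}
\norm{Az^\star - A\bar{x}}_\infty \leq \norm{Ax^\star - A\bar{x}}_\infty + \norm{Af^\star}_\infty \leq \bigl(2m^{3/2}\norm{A}_\infty + \varepsilon\bigr) + m^{3/2}\norm{A}_\infty = 3m^{3/2}\norm{A}_\infty + \varepsilon,
\end{equation*}
which is exactly the claim.

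The main obstacle is the bookkeeping in part~(1): making sure the $\varepsilon^2$ slack introduced by $\varepsilon$-closeness propagates to an additive $\varepsilon$ (not $\varepsilon^2$ or $2\varepsilon$) in the final $\ell_2$ bound, and checking that no cross term forces a worse constant than $2m^{3/2}$. I expect the triangle-inequality decomposition through a true optimum $\hat{x}$ sidesteps most of this: once $\norm{A\hat{x} - A\bar{x}}_2 \leq \varepsilon$ is established from~\eqref{eq:eps-close} and Lemma~\ref{lem:sep}, the rest is a one-line application of Theorem~\ref{thm:proximity} and $\norm{\cdot}_\infty \leq \norm{\cdot}_2$. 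Part~(2) is then entirely routine, the only subtlety being the invocation of Lemma~\ref{lem:frac-entries} to guarantee $x^\star$ has few fractional coordinates so that $\norm{f^\star}_1 \leq m$.
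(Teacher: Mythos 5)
Your proposal is correct and follows essentially the same route as the paper: bound $\norm{A\hat{x}-A\bar{x}}_2\leq\varepsilon$ via the squared-norm expansion and Lemma~\ref{lem:sep}, combine with Theorem~\ref{thm:proximity} by the triangle inequality, and handle part~(2) by bounding $\norm{Af^\star}_\infty\leq m\norm{A}_\infty$ using that $f^\star$ has at most $m$ entries, each in $[0,1)$. The ``cleanest route'' you identify is exactly the paper's argument; the preliminary speculation about reproving Theorem~\ref{thm:proximity} with slack is unnecessary.
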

\begin{proof}
We start by estimating the distance from $A \bar{x}$ to $A \hat{x}$ for an optimal solution $\hat{x}$ of~\eqref{P1}.
We have
\begin{align*}
\| b - A\bar{x} \|_2^2 &= \| b - A \hat{x} + A\hat{x} - A \bar{x} \|_2^2 \\
&= \| b-A \hat{x} \|_2^2 + \| A \hat{x} - A \bar{x} \|_2^2 + 2 (b - A\hat{x})^\intercal (A\hat{x} - A\bar{x}),
\end{align*}
where the last term is non-negative by Lemma~\ref{lem:sep}.
Rearranging terms, we obtain
\begin{equation*}
\| A\hat{x} - A\bar{x} \|_2^2 = \| b - A \bar{x} \|^2_2 - \| b - A \hat{x} \|^2_2 - 2(b-A\hat{x})^\intercal (A\hat{x} - A\bar{x}) 
\leq \varepsilon^2.
\end{equation*}
Applying the triangle inequality and combining the above estimate with Theorem~\ref{thm:proximity}, we have
\begin{align*}
\|Ax^\star - A\bar{x}\|_{\infty} &\leq \|Ax^\star - A\bar{x}\|_{2} \leq \|Ax^\star - A\hat{x} \|_{2} + \|A\hat{x} - A\bar{x}\|_{2} \\
&\leq 2 m^{3/2} \norm{A}_{\infty} + \varepsilon.
\end{align*}
For Part~\ref{pt:epsilon-2},
recall that $x^\star - z^\star = f^\star$ with $\|f^\star\|_0 \leq m$, implying the inequality $\|A(x^\star -z^\star)\|_{\infty} \leq m \|A\|_{\infty}$.
We obtain
\begin{align*}
\|Az^\star - A\bar{x} \|_{\infty} &= \|Az^\star - Ax^\star + Ax^\star - A\bar{x}\|_\infty \\
&\leq \|A(z^\star - x^\star)\|_{\infty} + \|Ax^\star - A\bar{x}\|_\infty \\
&\leq m \norm{A}_{\infty} + 2 m^{3/2} \norm{A}_{\infty} + \varepsilon \leq 3 m^{3/2} \norm{A}_{\infty} + \varepsilon.
\end{align*}
\end{proof}

We next outline the decomposition $x^\star = z^\star + f^\star$.
If we chose a strict decomposition $f^\star = x^\star - \lfloor x^\star \rfloor$ we would have to guess all sets $\cF$ of cardinality at most $m$.
When reconstructing $f^\star$ later on however, we also allow entries with index in $\cF$ to be integral. 
This allows us to guess only sets $\cF \subseteq [n]$ with $|\cF| = m$.
As a first step, we guess the support of $f^\star$ in a straightforward way.
\begin{lemma}
\label{obs:guessing-fractional-entries}
There are $n^m$ potentially different index sets $\supp (f^\star)$.
\end{lemma}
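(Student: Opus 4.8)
The plan is to count the number of ways to choose the support of the fractional part $f^\star$ of an optimal solution $x^\star$ to~\eqref{P0}. By Lemma~\ref{lem:frac-entries}, Part~\ref{pt:1}, we may assume $x^\star$ has at most $m$ fractional entries, so $\supp(f^\star) = \{i \in [n] : x^\star_i \in (0,1)\}$ is a subset of $[n]$ of cardinality at most $m$. As explained in the paragraph preceding the lemma, in the reconstruction phase we do not insist that the guessed index set $\cF$ equals $\supp(f^\star)$ exactly; we only need $\supp(f^\star) \subseteq \cF$ and $|\cF| = m$. Hence it suffices to enumerate all subsets $\cF \subseteq [n]$ with $|\cF| = m$.

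The key step is then just the elementary bound $\binom{n}{m} \leq n^m$. First I would observe that every size-$m$ subset of $[n]$ arises as the image of some injection $[m] \hookrightarrow [n]$, and there are exactly $n(n-1)\cdots(n-m+1) \leq n^m$ such injections; since the map from injections to their images is surjective onto the size-$m$ subsets, we get $\binom{n}{m} \leq n^m$. (If $m > n$ there are no valid subsets and the bound is vacuous; if $m \leq n$ the argument above applies.) This shows there are at most $n^m$ candidate index sets, which is the claimed count.

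I do not anticipate a genuine obstacle here — the statement is essentially bookkeeping. The only subtlety worth flagging explicitly is \emph{why $n^m$ rather than $\binom{n}{m}$ appears}: the paper deliberately over-counts, allowing $\cF$ to have size exactly $m$ (padding with indices where $f^\star$ is actually $0$ or $1$) so that the subsequent algorithmic steps need not branch on $|\cF|$. I would state the bound as $n^m$ to match how it is used later, and remark in one sentence that this is a convenient (loose) upper bound for $\binom{n}{m}$ that simplifies the downstream analysis.
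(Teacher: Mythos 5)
Your argument is correct and matches the paper's (implicit) reasoning: the paper states this lemma without proof, treating it as the elementary observation that, since $|\supp(f^\star)| \leq m$ by Lemma~\ref{lem:frac-entries} and one may pad the guessed set $\cF$ to size exactly $m$, the number of candidates is bounded by $\binom{n}{m} \leq n^m$. Your additional remark on why the loose bound $n^m$ is used rather than $\binom{n}{m}$ is accurate and consistent with how the lemma is applied in Theorem~\ref{thm:main-1}.
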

A canonical approach would be to search for the vector $f^\star$.
Then we run again into the problem that our objective is a non-linear objective, and hence $f^\star$ depends on $z^\star$.
This requires us to first search for an optimal $z^\star$ and then use continuous optimization techniques to compute $f^\star$.

We denote by $A_{\setminus f^\star}$ the matrix $A$ without the columns with index in $\supp(f^\star)$.
The next theorem shows that we can compute a small set $Z^\star$ of possible vectors for $z^\star$. 
\begin{theorem}
\label{thm:finding-z-star}
Let $\bar{x}$ be an $\varepsilon$-close solution to~\eqref{P1}.
If $\supp(f^\star)$ is fixed, we can compute a set $Z^\star \subseteq \{0,1\}^n$ of candidate vectors such that \ $x^\star = z^\star + f^\star$ with $z^\star \in Z^\star$.
This requires us to solve at most $(6 m^{3/2} \norm{A}_{\infty} + 2\varepsilon +1)^m$ linear integer programming problems.
\end{theorem}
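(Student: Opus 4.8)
The plan is to feed the $\ell_\infty$-proximity estimate of Corollary~\ref{cor:proximity} into a lattice-point count for the image $Az^\star$, and then to recover the admissible integral parts by solving one integer program per admissible image.

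First I would set $R \df 3 m^{3/2}\norm{A}_\infty + \varepsilon$ and recall from the second part of Corollary~\ref{cor:proximity} that there is an optimal solution $x^\star$ of~\eqref{P0} whose integral part $z^\star$ (with $\supp(z^\star)\cap\supp(f^\star)=\emptyset$) satisfies $\norm{Az^\star - A\bar x}_\infty \le R$. Since $A$ is integral and $z^\star\in\{0,1\}^n$, the vector $Az^\star$ is an integer point of the axis-parallel box $\prod_{j=1}^m[(A\bar x)_j-R,(A\bar x)_j+R]$. Each side of this box is an interval of length $2R$ and hence contains at most $2R+1$ integers, so the box contains at most $(2R+1)^m = (6m^{3/2}\norm{A}_\infty+2\varepsilon+1)^m$ lattice points; let $W$ be this finite set, which we can enumerate explicitly.

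Next, for every $w\in W$ I would solve the integer program
\begin{equation*}
\min\Big\{\, \one^\intercal z \ :\ A_{\setminus f^\star}\, z = w,\ z\in\{0,1\}^{[n]\setminus\supp(f^\star)} \,\Big\},
\end{equation*}
and, whenever it is feasible, place an optimal solution (padded with zeros on $\supp(f^\star)$) into $Z^\star$. This solves at most $|W|\le (6m^{3/2}\norm{A}_\infty+2\varepsilon+1)^m$ integer programs, which is the stated bound. For correctness, apply this to the particular value $w_0\df Az^\star\in W$: the program is feasible, since the restriction of $z^\star$ is a witness, so it returns some $z\in Z^\star$ with $A_{\setminus f^\star}z = Az^\star$ and $\one^\intercal z\le \norm{z^\star}_0$. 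Setting $\tilde x\df z + f^\star$, the two summands have disjoint supports, so $\tilde x\in[0,1]^n$, and $\norm{\tilde x}_0 = \norm{z}_0 + \norm{f^\star}_0\le \norm{z^\star}_0+\norm{f^\star}_0=\norm{x^\star}_0\le\sigma$, while $A\tilde x = Az^\star + Af^\star = Ax^\star$. Hence $\tilde x$ is again optimal for~\eqref{P0}, decomposes as $\tilde x = z + f^\star$ with $z\in Z^\star$, and this is exactly the asserted property.

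The main obstacle is conceptual rather than computational: the integer program with right-hand side $w_0$ need not return the original $z^\star$, so one has to argue that the substitute $z$ still extends to an \emph{optimal} solution of~\eqref{P0}; the binding constraint in this step is the support bound $\norm{\cdot}_0\le\sigma$, which is precisely why the objective of the integer program is $\one^\intercal z$ rather than a bare feasibility test. Two routine points also deserve care: the proximity estimate must be invoked in the $\ell_\infty$-norm so that the set of reachable images is a genuine box, and the per-coordinate lattice count is $2R+1$ (not $2R$), which is what produces the ``$+1$'' in the claimed bound.
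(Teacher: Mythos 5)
Your proposal is correct and follows the paper's proof in all essentials: both invoke the second part of Corollary~\ref{cor:proximity} to confine $Az^\star$ to the integer points of a box of side length $2(3m^{3/2}\norm{A}_\infty+\varepsilon)$ centred at $A\bar{x}$, count at most $(6m^{3/2}\norm{A}_\infty+2\varepsilon+1)^m$ such points, and solve one bounded integer program in the variables indexed by $[n]\setminus\supp(f^\star)$ for each of them. The one place you genuinely deviate is the per-point program: the paper poses a pure feasibility problem with the explicit cardinality cap $\sum_i y_i \leq \sigma-m$, whereas you minimize $\one^\intercal z$ with no cap and then run an exchange argument showing that the minimizer, recombined with $f^\star$, is again optimal for~\eqref{P0}. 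Your variant is arguably the more careful one: the cap $\sigma-m$ implicitly presumes that the guessed set $\cF$ can be taken inside $\supp(x^\star)$ so that $\|f^\star\|_0=m$, which degenerates in corner cases such as $\|x^\star\|_0<m$ or $\sigma<m$, while your objective only needs $\|z\|_0\leq\|z^\star\|_0$ to certify feasibility of the recombined solution. Both formulations cost the same number of integer programs, so the stated bound is unaffected.
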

\begin{proof}
We have  $Az^\star \in A\bar{x} + [-D_{\varepsilon},D_{\varepsilon}]^m \cap \Z^m$,
where $D_{\varepsilon} = 3 m^{3/2} \norm{A}_{\infty} + \varepsilon$  by Corollary~\ref{cor:proximity}.
For every $b^\star \in A \bar{x} + [-D_{\varepsilon},D_{\varepsilon}]^m \cap \Z^m$ we solve the integer feasibility problem
\[
A_{\setminus f^\star} y = b^\star, \quad
\sum_{i=1}^{n-m} y_i \leq \sigma-m, \quad 
y \in \{ 0, 1\}^{n-m}. 
\]
If it has a feasible solution $y$, we can insert zero entries according to $\supp(f^\star)$ and obtain a vector $z \in \{0,1\}^n$ that qualifies as the vector $z^\star$.
The set $Z^\star$ is the set of all extended vectors $z$.
\end{proof}
It remains to compose each $z^\star \in Z^\star$ with a vector $f^\star$.
This is accomplished by solving a series of least-square problems.
The reason why we proceed in this way is that it allows us to compute the exact vector $f^\star$ as opposed to an $\varepsilon$-close solution.
\begin{lemma}[Extension lemma]
For each $z \in Z^\star$ an optimal solution $f$ to $\min \{\|b - Az - Af \|_2 : \ \supp(f) \subseteq \supp(f^\star), 0 \leq f \leq \mathbf{1} \}$ can be computed in $\mathcal{O} (3^m m^3)$ arithmetic operations.
\end{lemma}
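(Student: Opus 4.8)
The plan is to observe that once $\supp(f^\star)$ is fixed and $z$ is given, the quantity $b - Az$ is a fixed vector in $\R^m$, so the task reduces to a bound-constrained least-squares problem in the at most $m$ variables indexed by $\supp(f^\star)$. Writing $F \df \supp(f^\star)$ and $A_F$ for the corresponding $m \times |F|$ submatrix, we want to minimize $\|c - A_F g\|_2$ over $g \in [0,1]^{|F|}$, where $c \df b - Az \in \R^m$. This is a convex quadratic program in $|F| \leq m$ variables with $2|F|$ box constraints, and the key structural fact is that an optimal solution is determined by which of these box constraints are active: the active set partitions $F$ into indices where $g_i = 0$, indices where $g_i = 1$, and a ``free'' set $J$ on which the unconstrained minimizer of $\|c - A_F g\|_2$ must coincide with the optimum.

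First I would enumerate all $3^{|F|} \leq 3^m$ possible active-set configurations, i.e.\ all ways of assigning each index of $F$ to one of the three states $\{0,1,\text{free}\}$. For each configuration, the candidate solution is obtained by fixing the $0$- and $1$-variables at their prescribed values, moving their contribution to the right-hand side, and solving the resulting \emph{unconstrained} least-squares problem $\min_{g_J} \|c' - A_J g_J\|_2$ on the free block $J$; this is solved via the normal equations $A_J^\intercal A_J g_J = A_J^\intercal c'$, which costs $\mathcal{O}(m^3)$ arithmetic operations (forming $A_J^\intercal A_J$ and $A_J^\intercal c'$ is $\mathcal{O}(m^3)$ since $|J|, m \leq m$, and solving the $|J| \times |J|$ system is $\mathcal{O}(m^3)$). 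I would then check feasibility of the resulting full vector $f$ (that its free entries lie in $[0,1]$) and keep it as a candidate only if feasible. Among all feasible candidates over all $3^{|F|}$ configurations, I output the one with smallest objective value $\|b - Az - Af\|_2$.

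Correctness follows from standard convex optimization: the feasible region $[0,1]^{|F|}$ is a polytope and the objective is convex, so the KKT conditions are necessary and sufficient for optimality; at any optimal $f$, each coordinate is either at a bound or has a vanishing partial derivative of the objective, and in the latter ``free'' case the restriction of the objective to the free coordinates (with the others fixed) is minimized at an interior point, hence satisfies the corresponding normal equations. Thus the true optimal $f$ arises as the candidate associated with its own active-set configuration, and since we enumerate all $3^{|F|}$ configurations we are guaranteed to encounter it. The total cost is $3^{|F|}$ configurations times $\mathcal{O}(m^3)$ per configuration, and since $|F| = |\supp(f^\star)| \leq m$ by Lemma~\ref{lem:frac-entries}, this is $\mathcal{O}(3^m m^3)$ as claimed. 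A subtlety worth noting is that $A_J^\intercal A_J$ may be singular if the columns of $A_J$ are dependent; in that case the normal equations still have a solution (the right-hand side lies in the column space of $A_J^\intercal A_J$), and any such solution attains the minimum, so one may take a minimum-norm solution via the pseudoinverse without affecting the cost bound.

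The main obstacle I anticipate is not conceptual but bookkeeping: making precise that enumerating active sets genuinely captures \emph{some} optimal solution even in degenerate cases — when $A_J$ has dependent columns the free-block minimizer is not unique, and one has to argue that at least one choice of active-set configuration together with one choice of normal-equations solution reproduces an optimum, rather than that the optimum is the \emph{unique} point with a given active set. This is handled by the observation that for the true optimal $f$, the configuration where ``free'' is exactly the set of coordinates strictly inside $(0,1)$ yields, among its normal-equations solutions, the value $f$ itself; so at least one candidate with the correct objective value is produced. Everything else is routine linear algebra and a loop of fixed size.
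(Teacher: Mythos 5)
Your proposal is correct and follows essentially the same route as the paper: fix the support, reduce to a box-constrained least-squares problem in at most $m$ variables, enumerate the $3^m$ assignments of each coordinate to $\{0,1,\text{free}\}$, solve an unconstrained least-squares problem per guess in $\mathcal{O}(m^3)$ operations, and keep the best feasible candidate. Your explicit discussion of the degenerate case where $A_J$ has dependent columns is a point the paper glosses over, but it does not change the argument.
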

\begin{proof}
As $f_i = 0$ for $i \notin \supp (f^\star)$, we can restrict to the matrix $A_{f^\star} \in \Z^{m \times m}$ and solve the equivalent problem $\min \{\|b^\prime - A_{f^\star} g\|_2 : \ g \in [0,1]^m\}$ for $b^\prime \df b - Az$.
Without the variable bounds this is a least-square problem that can be solved in $\mathcal{O}(m^3)$ arithmetic operations.
Let $g^\star$ be an optimal solution.
We guess the sets $S_0 \df \{i: \ g^\star_i = 0\}$ and $S_1 \df \{i: \ g^\star_i = 1\}$, and afterwards solve the 
modified least-square problem $\min \{\|b - A_{f^\star} g\|_2 : \ g_i = 0 \ \forall i \in S_0, g_i = 1 \ \forall i \in S_1\}$.
If the solution $g$ is in $[0,1]^n$, its extension $f \in [0,1]^n$ qualifies as $f^\star$.
In the end, we pick the best among all feasible extensions.
As there are $3^m$ guesses, this finishes the proof.
\end{proof}
This completes the presentation of the main steps to prove Theorem~\ref{thm:main-1}.
In fact, in order to obtain an optimal solution to~\eqref{P0} one proceeds as follows.
We first guess the set $\supp(f^\star)$, determine the set $Z^\star$ and compute for every $z^\star \in Z^\star$ an optimal vector $f^\star$.
The best of all those solutions solves~\eqref{P0}.
As a last technicality, we have to show how to find an $\varepsilon$-close solution $\bar{x}$ for which we fall back on~\cite[Chap.\ 8]{Nesterov1994InteriorpointPA}.
\begin{lemma}[{\cite[Chap.\ 8]{Nesterov1994InteriorpointPA}}]
We can find a $\sqrt{m}\|A\|_{\infty}$-close solution for~\eqref{P1} in $\mathcal{O} \left( n^{7/2} \ln \left( n^2 \sigma \|b\|_1  \right) \right)$ arithmetic operations.
\end{lemma}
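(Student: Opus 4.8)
The plan is to cast~\eqref{P1} in the conic form to which the path-following machinery of~\cite[Chap.\ 8]{Nesterov1994InteriorpointPA} applies. Introducing an epigraph variable $t$, the relaxation~\eqref{P1} is equivalent to
\begin{equation*}
\min_{x,t}\ t \quad \text{subject to}\quad \|Ax-b\|_2 \le t,\ \ 0 \le x \le \mathbf{1},\ \ \textstyle\sum_{i=1}^n x_i \le \sigma .
\end{equation*}
Its feasible region is the intersection of a polyhedron with $2n+1$ facets and one second-order-cone constraint, and it is bounded since it lies in $[0,1]^n \times \R_{\ge 0}$. First I would assemble a self-concordant barrier for this set: the logarithmic barrier of the polyhedron, $-\sum_i \ln x_i - \sum_i \ln(1-x_i) - \ln(\sigma-\sum_i x_i)$, of parameter $2n+1$, plus the standard $2$-self-concordant barrier $-\ln(t^2-\|Ax-b\|_2^2)$ of the cone; the sum is a self-concordant barrier of parameter $\nu = O(n)$.

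Next I would run a short-step path-following method on this barrier, started from a strictly feasible, well-centred point obtained by a standard centering phase from, say, $x = \tfrac12\min\{1,\sigma/n\}\,\mathbf{1}$ together with a sufficiently large $t$. Each Newton step amounts to forming and solving a linear system of size $O(n)$, costing $\mathcal{O}(n^3)$ arithmetic operations, and reducing the duality gap by a constant factor costs $\mathcal{O}(\sqrt{\nu}) = \mathcal{O}(\sqrt{n})$ such steps.

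Then I would convert the target accuracy into a number of outer iterations. Since $A \in \Z^{m\times n}$ is nonzero we may assume $\|A\|_\infty \ge 1$, so the required absolute accuracy $\varepsilon^2 = m\|A\|_\infty^2$ in the squared objective is at least $1$, while the optimal value of the epigraph problem is at most $\|A\hat{x}-b\|_2 \le \sigma\sqrt{m}\,\|A\|_\infty + \|b\|_1$, which also bounds the initial gap. Hence the relative reduction needed is by a factor at most $(\sigma\sqrt{m}\,\|A\|_\infty+\|b\|_1)/(m\|A\|_\infty^2) \le \sigma+\|b\|_1 \le n^2\sigma\|b\|_1$, the $\|A\|_\infty$-dependence cancelling; the method achieves this in $\mathcal{O}\big(\sqrt{n}\,\ln(n^2\sigma\|b\|_1)\big)$ Newton steps. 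Multiplying by the $\mathcal{O}(n^3)$ per-step cost gives the claimed $\mathcal{O}\big(n^{7/2}\ln(n^2\sigma\|b\|_1)\big)$ bound, and a returned point $\bar{x}$ whose objective is within $\varepsilon^2$ of the optimum is $\varepsilon$-close in the sense of~\eqref{eq:eps-close}.

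The main obstacle here is bookkeeping rather than conceptual. One must verify that the assembled barrier is genuinely self-concordant with parameter $O(n)$ (precisely the barrier-calculus of~\cite[Chap.\ 8]{Nesterov1994InteriorpointPA}), exhibit an explicit strictly feasible starting point with polynomially bounded initial potential so that the centering phase is cheap, and carry the crude a priori bounds on the optimal value and on $\|b-A\bar{x}\|_2$ through the accuracy conversion so that the logarithmic factor collapses exactly to $\ln(n^2\sigma\|b\|_1)$ with no residual $\|A\|_\infty$ term.
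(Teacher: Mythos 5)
Your proposal is correct in outline but takes a genuinely different route through the same source. The paper does not reformulate~\eqref{P1} as a second-order-cone epigraph problem; it keeps the quadratic objective $\|b-Ax\|_2^2$ over the polytope $P = \{x\in[0,1]^n:\ \|x\|_1\le\sigma\}$ and invokes the \emph{relative-accuracy} bound of~\cite[Chap.~8, Eq.~8.1.5]{Nesterov1994InteriorpointPA}: it bounds the objective's variation $\mathcal{D} = \max_{y\in P}\bigl(\|b-Ay\|_2^2-\|b-A\hat{x}\|_2^2\bigr) \le 4\sigma^2 m\|A\|_\infty^2(\|b\|_1+1)$, picks the explicit centre $w=\tfrac{\sigma}{n+\sigma}\mathbf{1}$ with asymmetry coefficient $\alpha(P:w)\ge\sigma/n$, and chooses $\delta = \tfrac{1}{4\sigma^2(\|b\|_1+1)}$ so that $\delta\mathcal{D}\le m\|A\|_\infty^2$; the $\|A\|_\infty$-dependence cancels inside the logarithm exactly as in your accounting. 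What the paper's route buys is that it never needs a barrier for the second-order cone, never needs a separate centering phase (the asymmetry coefficient plays that role), and --- crucially --- never has to translate between accuracy in $\|Ax-b\|_2$ and accuracy in $\|Ax-b\|_2^2$. Your route stumbles slightly on exactly that last point: your ratio $(\sigma\sqrt{m}\|A\|_\infty+\|b\|_1)/(m\|A\|_\infty^2)$ divides an initial gap measured in the \emph{unsquared} objective $t$ by a target accuracy $\varepsilon^2$ measured in the \emph{squared} objective. To guarantee $t_{\bar{x}}^2 - t_{\hat{x}}^2 \le \varepsilon^2$ you need $t_{\bar{x}}-t_{\hat{x}} \le \varepsilon^2/(2t_{\max})$ with $t_{\max}\le \sigma\sqrt{m}\|A\|_\infty+\|b\|_1$, so the correct reduction factor is roughly $2t_{\max}^2/\varepsilon^2 \le 2(\sigma+\|b\|_1)^2$; its logarithm is still $\mathcal{O}(\ln(n^2\sigma\|b\|_1))$, so your final bound survives, but the intermediate step as written is dimensionally inconsistent. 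Beyond that, the items you defer (self-concordance of the composite barrier with parameter $\mathcal{O}(n)$, a polynomially bounded initial potential) are genuine obligations of your reformulation that the paper's choice of framework sidesteps entirely.
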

\begin{proof}
We apply the results presented in~\cite[Chap.\ 8]{Nesterov1994InteriorpointPA} that depend on several parameters.
Let $P \df \{x \in [0,1]^n: \ \|x\|_1 \leq \sigma \}$ denote the feasible region of~\eqref{P1} and $\hat{x}$ an optimal solution.
We first need to estimate
\begin{equation*}
\mathcal{D} \df \max \{ \|b-A y \|_2^2 - \|b-A\hat{x}\|_2^2: \ y  \in P \}.
\end{equation*}
For any $y \in P$ we can estimate
\begin{align*}
\| b -  A y \|_2^2 - \|b - A\hat{x} \|^2_2 &= \|A y \|_2^2 - \|A \hat{x} \|_2^2 + 2b^\intercal A (\hat{x} - y) \\
&\leq \sigma^2 m \| A \|_{\infty}^2 + 4 \| b \|_1 \sigma \|A\|_{\infty} \\
&\leq 4 \sigma^2 m \| A \|_{\infty}^2 (\|b\|_1 + 1),
\end{align*}
resulting in $\mathcal{D} \leq 4 \sigma^2 m \| A \|_{\infty}^2 (\|b\|_1 + 1)$.
As the initial point in the interior of $P$ that is required in~\cite[Chap.\ 8]{Nesterov1994InteriorpointPA} we choose $w \df \tfrac{\sigma}{n + \sigma} \cdot \mathbf{1}$ where $\mathbf{1}$ denotes the all-ones vector.
Next we estimate the \emph{asymmetry coefficient} 
\[
\alpha (P:w) \df \max \{ t: \ w + t(w-P) \subseteq P \}.
\]
Since $[0,\tfrac{\sigma}{n}]^n \subseteq P \subseteq [0,1]^n$, for $t = \tfrac{\sigma}{n}$ we obtain
\[
w + t ( w-P ) \subseteq  w + t ( w - [0,1]^n ) 
= \left[ 0 , \tfrac{\sigma}{n} \right]^n \subseteq P,
\]
thus $\alpha (P:w) \geq \tfrac{\sigma}{n}$.
By~\cite[Chap.\ 8, Eq. 8.1.5]{Nesterov1994InteriorpointPA} we can compute a feasible solution $\bar{x}$ of \eqref{P1} satisfying 
$\|b-A\bar{x}\|_2^2-\|b-A\hat{x}\|_2^2 \leq \delta \mathcal{D}$ in $O(1)(2n+1)^{1.5}n^2\ln \left(\frac{2n+1}{\alpha(P:w) \delta}\right)$ arithmetic operations. 
Finally, by choosing $\delta = \frac{1}{4\sigma^2(\norm{b}_1+1)}$ finding a $\sqrt{m} \|A\|_{\infty}$-close solution
takes $\mathcal{O} \left( n^{7/2} \ln \left( n^2 \sigma \|b\|_1  \right) \right)$ arithmetic operations.
\end{proof}


\section{Extension}
\label{sec:conclusion}
A natural generalization of our problem is to consider arbitrary upper bounds $u_i>0$, i.e. 
\begin{equation}
\label{l0-problem arbitrary bounds}
\tag{$P_0^\prime$}
\min_x \| Ax-b \|_2
\text{ subject to } \norm{x}_0 \leq \sigma \text{ and } 0 \leq x_i \leq u_i \text{ for all } i \in \lbrack n \rbrack.
\end{equation}
The natural convex relaxation of~\eqref{l0-problem arbitrary bounds} is given by:
\begin{equation}
\label{l1-relaxation arbitrary bounds}
\tag{$P_1^\prime$}
\min_x \| Ax-b \|_2
\text{ subject to }  \sum_{i=1}^n \frac{x_i}{u_i} \leq \sigma \text{ and } 0 \leq x_i \leq u_i \text{ for all } i \in \lbrack n \rbrack.
\end{equation}
The results of Sections~\ref{sec:proximity} and~\ref{sec:algorithm} extend to this generalization in a straight-forward manner.
For the algorithm it implies that the number of arithmetic operations increases by an additional factor of $\norm{u}_{\infty}^m$.
The reason is the core of our approach:
The proximity bound between optimal solutions for~\eqref{l0-problem arbitrary bounds} and~\eqref{l1-relaxation arbitrary bounds} respectively, increases by this factor.
The proximity bound must however depend on $\| u \|_{\infty}$ as the following example shows:

Let $n$ and $u$ be even, non-negative integers.
Set $A  \df \mathbb{1}, \sigma \df \frac{n}{2}$ and $b=\frac{u}{2}\mathbf{1}$ where $\mathbf{1}$ denotes the all-ones vector.
It can easily be checked that $\hat{x}=\frac{u}{2}\mathbf{1}$ is optimal for~\eqref{l1-relaxation arbitrary bounds} while
\begin{equation*}
x^{\star}_i=\begin{cases} \frac{u}{2} , i \in \lbrack \sigma \rbrack \\
0, i \in \lbrack n \rbrack \setminus \lbrack \sigma \rbrack \end{cases}
\end{equation*}
is optimal for~\eqref{l0-problem arbitrary bounds}.
This shows that any approach aiming for a logarithmic dependency on $\norm{u}_{\infty}$ requires techniques that are different from the ideas presented in this paper.

\subsubsection{Acknowledgements} 
The second and third author acknowledge support by the Einstein Foundation Berlin.

%
%
%
 \bibliographystyle{splncs04}
%

\bibliography{bibliography}
\end{document}